\newenvironment{nouppercase}{%
\renewcommand{\uppercasenonmath}[1]{}}{}
\renewcommand{\section}{%
\@startsection{section}{1}%
  \z@{.7\linespacing\@plus\linespacing}{.5\linespacing}%
 {\normalfont\large\bfseries\centering}}
\newtheorem{theorem}{Theorem}[section]
\newtheorem{lemma}[theorem]{Lemma}
\newtheorem{proposition}[theorem]{Proposition}
\theoremstyle{definition}
\newtheorem{definition}[theorem]{Definition}
\theoremstyle{remark}
\newtheorem{remark}[theorem]{Remark}
\newtheorem*{remark*}{Remark}
\newcommand{\R}{\mathbb{R}}
\def\C{\mathbb{C}}
\def\x{\times}
\newcommand{\pt}{\partial}
\newcommand{\del}{\partial}
\newcommand{\om}{\omega}
\newcommand{\lam}{\lambda}
\newcommand{\ol}{\overline}
\newcommand{\on}{\operatorname}
\renewcommand{\Re}{\operatorname{Re}}
\renewcommand{\Im}{\operatorname{Im}}
\newcommand{\spn}{\operatorname{span}}
\newcommand{\ce}{\mathrel{\mathop:}=}
\newcommand{\ec}{=\mathrel{\mathop:}} 
\newcommand{\ve}{\varepsilon}
\newcommand{\eps}{\varepsilon}
\newcommand{\mb}{\mathbb}
\newcommand{\mc}{\mathcal}
\newcommand{\mr}{\mathrm}
\newcommand{\bal}{\bar{\alpha}}
\def\tbra[#1,#2]{\langle #1 , #2\rangle}
\def\rbra[#1,#2]{\left( #1 , #2 \right)}
\newcommand{\cleq}{\lesssim}
\newcommand{\cgeq}{\gtrsim}
\let\del\partial
\renewcommand{\l}{\left}
\renewcommand{\r}{\right}
\newcommand{\cE}{{\mathcal E}}
\newcommand{\cM}{{\mathcal M}}
\newcommand{\ds}{\displaystyle}
\begin{document}

\title[]{\Large 
Instability of degenerate solitons for nonlinear Schr\"odinger equations with derivative}

\author[]{\large
Noriyoshi Fukaya}
\address[N. Fukaya]{\upshape 
Department of Mathematics, Tokyo University of Science, Tokyo, 162-8601, Japan}
\email{fukaya@rs.tus.ac.jp}

\author[]{\large
Masayuki Hayashi}
\address[M. Hayashi]{\upshape 
Research Institute for Mathematical Sciences, Kyoto University, Kyoto 606-8502, Japan}

\email{hayashi@kurims.kyoto-u.ac.jp}



\date{\today}

\begin{abstract}
We consider the following nonlinear Schr\"{o}dinger equation with derivative:
\begin{equation}
\label{eq:1}
i u_t =-u_{xx} -i |u|^{2}u_x -b|u|^4u , \quad (t,x) \in \R \times \R, \ b \in \R .
\end{equation}
If $b=0$, this equation is a gauge equivalent form of the well-known derivative nonlinear Schr\"{o}dinger (DNLS) equation. 
The soliton profile of DNLS satisfies a certain double power elliptic equation with cubic-quintic nonlinearities. The quintic nonlinearity in \eqref{eq:1} only affects the coefficient in front of the quintic term in the elliptic equation, so in this sense the additional nonlinearity is natural as a perturbation preserving soliton profiles of DNLS.
When $b\ge 0$, the equation \eqref{eq:1} has degenerate solitons whose momentum and energy are zero, and if $b=0$, they are algebraic solitons. 
Inspired from the works \cite{MM01, FHR19} on instability theory of the $L^2$-critical generalized KdV equation, we study the instability of degenerate solitons of \eqref{eq:1} in a qualitative way, and when $b>0$, we obtain a large set of initial data yielding the instability. The arguments except one step in our proof work for the case $b=0$ in exactly the same way, which is a small step towards understanding the dynamics around algebraic solitons of the DNLS equation.
\end{abstract}

\begin{nouppercase}
\maketitle
\end{nouppercase}


\numberwithin{equation}{section} 
\section{Introduction}


We consider the following nonlinear Schr\"odinger equation with derivative:
\begin{align} \label{DNLSb}
  iu_t
  =-u_{xx}
  -i|u|^2u_x
  -b|u|^4u,
  \quad(t,x)\in\R\x\R,
\end{align}
where $b\in\R$, and $u$ is the complex-valued unknown function of $(t,x)\in\R\x\R$. 
It is well-known (see \cite{Oz96}) that \eqref{DNLSb} is locally well-posed in the energy space $H^1(\R)$ 
and the following three quantities
\begin{align*}
\tag{Energy}
  E(u)
  &\ce\frac{1}{2}\|u_x\|_{L^2}^2
  -\frac{1}{4}(i|u|^2u_x, u)_{L^2}
  -\frac{b}{6}\|u\|_{L^6}^6, 
\\ \tag{Mass}
  M(u)
  &\ce\|u\|_{L^2}^2, 
\\ \tag{Momentum}
  P(u)
  &\ce(iu_x,u)_{L^2},
\end{align*}
are conserved by the flow. Here the inner product $(\cdot,\cdot)_{L^2}$ is defined by
\[(v,w)_{L^2}
  =\Re\int_{\R}v(x)\ol{w(x)}\,dx, \]
and we regard $L^2(\R)$ as a real Hilbert space. The equation \eqref{DNLSb} is $L^2$-critical (mass-critical) in the sense that \eqref{DNLSb} is invariant under the scaling
\[u_\lambda(t,x)
  =\lambda^{1/2}u(\lambda^2t,\lambda x), \]
which satisfies $\|u_\lambda(0)\|_{L^2}=\|u(0)\|_{L^2}$. By using the energy functional, \eqref{DNLSb} is rewritten as
\begin{equation*}
  iu_t(t)
  =E'(u(t)). 
\end{equation*}

When $b=0$ the equation \eqref{DNLSb}
is sometimes referred to as the Chen-Lee-Liu equation \cite{CLL79}. This equation is a gauge equivalent form of the well-known derivative nonlinear Schr\"odinger equation
\begin{align} 
\label{DNLS}
\tag{DNLS}
  i\psi_t
  =-\psi_{xx}
  -i(|\psi|^2\psi)_x,\quad
  (t,x)\in\R\x\R,
\end{align}
which was introduced as a model in plasma physics \cite{MOMT76, M76} and shown to be completely integrable \cite{KN78}.
The soliton profile of \eqref{DNLS} satisfies a double power elliptic equation with cubic-quintic nonlinearities (see \eqref{eq:1.4}). The quintic nonlinearity in \eqref{DNLSb} only affects the coefficient in front of the quintic term in the elliptic equation, so in this sense the additional nonlinearity is not artificial, or rather natural as a perturbation preserving soliton profiles of \eqref{DNLS}. 
We note that the equation \eqref{DNLSb} is not integrable in the case $b\neq0$ while preserving the $L^2$-critical structure of \eqref{DNLS}.
This means that the equation \eqref{DNLSb} can be seen as an important model to clarify the difference between integrable and nonintegrable cases in the $L^2$-critical framework. 

Regardless of the relevance to \eqref{DNLS}, the equation \eqref{DNLSb} itself is an interesting mathematical model possessing a two-parameter family of solitons.\footnote{The terminology \textit{soliton} was originally used in a context of integrable equations, but we also use it for nonintegrable equations according to conventions in the literature.} 
For example, when $b>0$, this equation possesses both stable and unstable solitons in the $L^2$-critical framework, which cannot be seen in other critical equations such as $L^2$-critical NLS and $L^2$-critical generalized KdV. This property, of course, comes from the rich structure of a two-parameter family of solitons. 

We now state the solitons of \eqref{DNLSb} in more detail. The equation~\eqref{DNLSb} admits a two-parameter family of solitons
\[u_{\omega,c}(t,x)
  =e^{i\omega t}\phi_{\omega,c}(x-ct), \]
where $(\omega,c)\in\R^2$ satisfies
\begin{gather}\label{eq:1.2}
  \left\{\begin{alignedat}{2}
  &\mathopen{}-2\sqrt{\omega}<c\le 2\sqrt{\omega}\quad
  &&\text{if}~b>-3/16,
\\&\mathopen{}-2\sqrt{\omega}
  <c<-2\kappa_*\sqrt{\omega}\quad
  &&\text{if}~ b\le-3/16,
  \end{alignedat}\right.
\\\notag
  \kappa_*
  =\kappa_*(b)
  \ce\sqrt{\frac{-\gamma}{1-\gamma}}
  =\sqrt{\frac{3+16b}{16b}}\in(0,1)\quad
  \text{when}~b\le -3/16,
\\\gamma
  =\gamma(b)
  \ce 1+\frac{16}{3}b,
 \notag
\end{gather} 
and $\phi_{\omega,c}$ is explicitly written as
\begin{gather*} 
  \phi_{\omega,c}(x)
  =\Phi_{\omega,c}(x)\exp\left(i\frac{c}{2}x
  -\frac{i}{4}\int_{-\infty}^x\Phi_{\omega,c}(y)^2\,dy\right),
\\\notag
  \Phi_{\omega,c}(x)
  =\left\{\begin{alignedat}{2}
  &\left(\frac{2(4\omega-c^2)}{\sqrt{c^2+\gamma(4\omega-c^2)}\cosh(\sqrt{4\omega-c^2}\,x)-c}\right)^{1/2}\quad
  &&\text{if $-2\sqrt{\omega}<c<2\sqrt{\omega}$}, 
\\&\left(\frac{4c}{(cx)^2+\gamma}\right)^{1/2}\quad
  &&\text{if $c=2\sqrt{\omega}$}.
\end{alignedat}\right.
\end{gather*} 
We note that $\phi_{\omega,c}\in H^1(\R)$ is the nontrivial solution of the stationary equation
\begin{equation}\label{eq:1.3}
  -\phi''
  +\omega\phi
  +ci\phi'
  -i|\phi|^2\phi'
  -b|\phi|^4\phi
  =0,\quad 
  x\in\R,
\end{equation}
and that $\Phi_{\omega,c}$ is the positive even solution of
\begin{equation}\label{eq:1.4}
  -\Phi''
  +\Bigl(\omega-\frac{c^2}{4}\Bigr)\Phi
  +\frac{c}{2}|\Phi|^2\Phi
  -\frac{3}{16}\gamma|\Phi|^4\Phi
  =0,\quad
  x\in\R.
\end{equation}
The equation \eqref{eq:1.4} has nontrivial $H^1$-solutions if and only if $(\omega,c)$ satisfies \eqref{eq:1.2}. 

For $(\omega ,c)$ satisfying \eqref{eq:1.2}, one can rewrite
$(\omega ,c) =(\omega ,2\kappa\sqrt{\omega})$, where the parameter $\kappa$ satisfies 
\begin{align*}
\begin{array}{ll}
\ds -1 <\kappa\leq 1&\ds\text{if}~b>-3/16,\\[7pt]
\ds -1 <\kappa<-\kappa_{\ast}&\ds\text{if}~b\leq -3/16. 
\end{array}
\end{align*}
For each parameter $\kappa$, the following curve
\begin{align*}
\R^+ \ni\omega \mapsto (\omega , 2\kappa\sqrt{\omega}) \in \R^2 
\end{align*}
gives the scaling of the soliton:
\begin{align*}
\phi_{\omega ,2\kappa\sqrt{\omega}}(x) =\omega^{1/4} \phi_{1,2\kappa} (\sqrt{\omega}x)\quad\text{for}~x\in\R.
\end{align*}
When $b\ge 0$, there exists a unique $\kappa_0=\kappa_0(b)\in (0,1]$ such that
\begin{align*}
E(\phi_{1,2\kappa_0})=P(\phi_{1,2\kappa_0})=0,
\end{align*}
which implies that the soliton $u_{\omega,2\kappa_0\sqrt{\omega}}$ corresponds to the degenerate case.\footnote{See \eqref{eq:1.7} below more precisely.} 
We note that $0<\kappa(b)<1$ if $b>0$, and $\kappa_0(0)=1$. Therefore, algebraic solitons of \eqref{DNLS} correspond to the degenerate case, while degenerate solitons for $b>0$ have exponential decay at space infinity. 

The degenerate soliton can be also found in a different context, for example, the $L^2$-critical NLS
\begin{equation}
\tag{NLS}
\label{NLS}
  iu_t
  =- u_{xx}
  -|u|^{4}u,\quad
  (t,x)\in\R\x\R, 
\end{equation} 
and the $L^2$-critical generalized KdV equation
\begin{equation}
\tag{gKdV}
\label{eq:gKdV}
  u_t=-(u_{xx}
  +u^5)_x,\quad
  (t,x)\in\R\x\R. 
\end{equation}
The equations \eqref{NLS} and \eqref{eq:gKdV} have the same conserved quantities:
\begin{align}
\tag{Energy}
&{\cE}(v)= \frac{1}{2}\| v_x\|_{L^2}^2  -\frac{1}{6} \| v\|_{L^6}^6 ,\\
\tag{Mass}
&{\cM}(v) =\| v\|_{L^2}^2. 
\end{align}
\eqref{NLS} has the standing wave $e^{it}Q(x)$ and \eqref{eq:gKdV} has the traveling wave $Q(\cdot -t)$, where $Q(x)=\frac{3^{1/4}}{\cosh^{1/2}(2x)}$ is the positive even solution of
\begin{align*}
-Q'' +Q -Q^5=0, \quad x\in \R,
\end{align*}
and $Q$ is an optimizer of the following Gagliardo--Nirenberg inequality (see \cite{W82}):
\begin{align}
\label{GN1}
\frac{1}{6}\| f\|_{L^6}^6 \leq \frac{1}{2}\l( \frac{\cM(f)}{\cM(Q)} \r)^2\| f_x\|_{L^2}^2
\quad\text{for}~f\in H^1(\R).
\end{align}
In particular $\cE(Q)=0$ holds, which implies that the solitons $e^{it}Q(x)$ and $Q(\cdot -t)$ correspond to the degenerate case. It is also known that these degenerate solitons are unstable (see \cite{W82, MM01}).

Instability of degenerate solitons is important to understand the global dynamics of \eqref{NLS} and \eqref{eq:gKdV}. It follows from \eqref{GN1} and conservation laws that if the initial data $u_0\in H^1(\R)$ of \eqref{NLS} or \eqref{eq:gKdV} satisfies $\cM(u_0)<\cM (Q)$, the corresponding $H^1$-solution is global and satisfies
\begin{align*}
\frac{1}{2}\l( 1- \l( \frac{\cM(u_0)}{\cM(Q)} \r)^2\r) \| u_x(t)\|_{L^2}^2 \leq \cE(u_0)
\quad\text{for all}~ t\in\R.
\end{align*}
For \eqref{NLS}, it is known that finite time blow-up occurs for the initial data satisfying $\cM(u_0)>2\pi$ and $\cE(u_0)<0$ (see \cite{OT91}). On the other hand, for \eqref{eq:gKdV} existence of blow-up solutions is a more delicate problem. Martel and Merle \cite{MM02} proved that finite time blow-up occurs for the initial data satisfying
\begin{align}
\label{eq:1.6}
  \cE(u_0)<0,\quad
  \cM(Q)<\cM(u_0)<\cM(Q)+\alpha_0
\end{align}
and some decay condition, where $\alpha_0>0$ is a small constant. We note that before the work \cite{MM02}, the same authors \cite{MM01} proved instability of the soliton in a qualitative way, which led to an important step for proving the existence of blow-up solutions.


For \eqref{DNLSb} in the case $b\ge 0$,\footnote{The global result for the case $b<0$ was also established in \cite{H19}.} it was proved in \cite{Wu15, H19} that if the initial data $u_0\in H^1(\R)$ satisfies $M(u_0)<M(\phi_{1,2\kappa_0})\ec M^*$, then the corresponding $H^1$-solution is global and satisfies
\begin{align*}
\|u_x(t) \|_{L^2} \leq C(\|u_0 \|_{H^1})\quad \text{for all}~t\in\R,
\end{align*}
where the constant in the right-hand side is composed of the conserved quantities $E(u_0)$, $M(u_0)$, and $P(u_0)$. 
For \eqref{DNLS} this mass condition is nothing but the $4\pi$-mass condition.
In the recent progress of studies on \eqref{DNLS}, global well-posedness without the smallness assumption of the mass was established by taking advantage of completely integrable structure (see \cite{PSS17, JLPS20, BP, HKV}). These results give a remarkable difference with other $L^2$-critical equations \eqref{NLS} and \eqref{eq:gKdV}, while the dynamics of \eqref{DNLS} in the energy space is not yet clear including the fundamental problem of stability/instability of algebraic solitons. 



It was proved in \cite{H19} that the mass threshold $M^*$ gives a certain turning point in variational properties of \eqref{DNLSb}. This suggests that global dynamics of \eqref{DNLSb} will change at the mass of $M^*$. From the variational point of view, $M^*$ corresponds to the mass threshold $\cM(Q)$ in \eqref{NLS} and \eqref{eq:gKdV}. Therefore, to make clear the dynamics around the mass of $M^*$ is an important step towards understanding the global dynamics of \eqref{DNLSb}. To this end, in this paper we study instability properties of degenerate solitons of \eqref{DNLSb} in a qualitative way. 

We first give a precise definition of stability and instability of solitons.
\begin{definition}\label{def:1.1}
We say that the soliton $u_{\omega,c}$ of \eqref{DNLSb} is \emph{stable} if for any $\alpha>0$ there exists $\beta>0$ such that if $u_0\in H^1(\R)$ satisfies $\|u_0-\phi_{\omega,c}\|_{H^1}<\beta$, the solution $u(t)$ of \eqref{DNLSb} exists globally in time and satisfies
\[
\sup_{t\in\R}\inf_{(\theta,y)\in\R^2}\|u(t)-e^{i\theta}\phi_{\omega,c}(\cdot-y)\|_{H^1}<\alpha.
\]
Otherwise, we say that the soliton $u_{\omega,c}$ is \emph{unstable}. 
\end{definition}

We now review the known stability results related to our work. When $b=0$, Colin and Ohta \cite{CO06} proved by applying variational approach that if $\omega>c^2/4$, the soliton $u_{\omega,c}$ is stable. For the case $c=2\sqrt{\omega}$ some kinds of stability properties were studied in \cite{KPR06, KW18}, while the stability or instability in the sense of Definition \ref{def:1.1} remains an open problem.
Liu, Simpson, and Sulem \cite{LSS13b} calculated linearized operators of the generalized derivative nonlinear Schr\"{o}dinger equation
\begin{align}
\tag{gDNLS}
\label{GD}
iu_t+u_{xx}+i|u|^{2\sigma}u_x=0,\quad (t,x) \in \R \times \R, ~\sigma >0,
\end{align}
and studied stability of nondegenerate solitons by applying the abstract theory of Grillakis, Shatah, and Strauss \cite{GSS87, GSS90} (see also \cite{GW95} for partial results in this direction). Although well-posedness in the energy space for \eqref{GD} was assumed in \cite{LSS13b}, the well-posedness problem was later dealt with in \cite{Sa15, HO16, LPS19}.

When $b>0$, Ohta~\cite{O14} proved by applying variational approach in \cite{SS85, G91, CO06} that the soliton $u_{\omega,c}$ is stable if $-2\sqrt{\omega}<c<2\kappa_0\sqrt{\omega}$, and unstable if $2\kappa_0\sqrt{\omega}<c<2\sqrt{\omega}$. Ning, Ohta, and Wu \cite{NOW17} proved that the algebraic soliton is unstable for small $b>0$, where the assumption of smallness is used for construction of the unstable direction. We note that the momentum of the soliton $P(\phi_{\omega,c})$ is positive in the stable region $\{-2\sqrt{\omega}<c<2\kappa_0\sqrt{\omega}\}$, negative in the unstable region $\{2\kappa_0\sqrt{\omega}<c\le 2\sqrt{\omega}\}$, and zero on $\{c=2\kappa_0\sqrt{\omega}\}$ (see Remarks~2 and 3 of \cite{O14}). 
When $b<0$ the second author \cite{H20} proved by developing variational approaches in \cite{CL82, S83, CO06, O14} that all solitons including algebraic solitons are stable. We note that if $b<0$, the momentum of all solitons is positive. 

It is known that the stability/instability depends on the spectral properties of the Hessian matrix of the two-variable function
\[d(\omega,c)
  \ce S_{\omega,c}(\phi_{\omega,c}), \]
where $S_{\omega,c}$ is the action defined by
\[S_{\omega,c}(v)
  \ce E(v)
  +\frac{\omega}{2}M(v)
  +\frac{c}{2}P(v). \]
The abstract theory of Grillakis, Shatah, and Strauss~\cite{GSS87,GSS90} implies that under the spectral assumptions, which are verified for $\om>c^2/4$ in Proposition~\ref{prop:1.2} below, the soliton $u_{\om,c}$ is stable if $d''(\omega,c)$ has a positive eigenvalue, and unstable if $d''(\omega,c)$ has two negative eigenvalues. From a direct computation, we have the identity
\begin{equation}
\label{eq:1.7}
  \det[d''(\omega,c)]
  =\frac{-2P(\phi_{\omega,c})}{\sqrt{4\omega-c^2}\{c^2+\gamma(4\omega-c^2)\}}\quad
  \text{for}~\omega>\frac{c^2}{4}.
\end{equation}
This identity shows that the number of the positive/negative eigenvalues of $d''(\omega,c)$ depends on the sign of $P(\phi_{\omega,c})$. In particular, if $P(\phi_{\omega,c})=0$, then $d''(\omega,c)$ has a zero eigenvalue, which corresponds to the degenerate case.

We note that the abstract theory in \cite{GSS87, GSS90} is not applicable to degenerate solitons. In \cite{CP03, O11, M12} instability of degenerate solitons with one-parameter is studied in the abstract framework. The first author \cite{F17} extended the work of \cite{O11} to degenerate solitons with two-parameter. However, these results are not applicable to degenerate solitons of $L^2$-critical equations \eqref{NLS}, \eqref{eq:gKdV} and \eqref{DNLSb}. 
Recently, Ning~\cite{N20} proved the instability of the  soliton $u_{\omega,2\kappa_0\sqrt{\omega}}$ of \eqref{DNLSb} for sufficiently small $b>0$. The proof was done by combining localized virial identities and modulation analysis, whose argument was originally developed in \cite{WuKG, GNW}.

Our approach in the present paper is motivated by the works \cite{MM01, FHR19} on instability of degenerate solitons of \eqref{eq:gKdV}. 


We now state our results of this paper. We first organize the spectral properties of the linearized operator around the soliton. The linearized operator is explicitly written as
\begin{align}
\label{eq:1.8}
  L_{\omega,c}v
  \ce{}&S_{\omega,c}''(\phi_{\omega,c})v
\\\notag
  ={}&\mathopen{}-v_{xx}
  +\omega v
  +civ_{x}-i|\phi_{\omega,c}|^2v_{x}
  -2i\Re(\phi_{\omega,c}\ol{v})\phi_{\omega,c}'
\\&\qquad
  -b|\phi_{\omega,c}|^4v
  -4b|\phi_{\omega,c}|^2\Re(\phi_{\omega,c}\ol{v})\phi_{\omega,c}
\notag
\end{align}  
for $v\in H^1(\R)$. The following claim is used as a basic tool in the proof of our main result.
\begin{proposition}\label{prop:1.2}
Let $b\in\R$ and let $(\omega,c)$ satisfy \eqref{eq:1.2}. Then the space $H^1(\R)$ is decomposed as the orthogonal direct sum
\[H^1(\R)
  =\mathcal{N}_{\omega,c}
  \oplus \mathcal{Z}_{\omega,c}
  \oplus \mathcal{P}_{\omega,c}. \]
Here $\mathcal{N}_{\omega,c}$ is the negative subspace of $L_{\omega,c}$ spanned by the eigenvector $\chi_{\omega,c}$ corresponding to the simple negative eigenvalue $\lambda_{\omega,c}$, $\mathcal{Z}_{\omega,c}$ is the kernel of $L_{\omega,c}$ spanned by $i\phi_{\omega,c}$ and $\phi_{\omega,c}'$, and $\mathcal{P}_{\omega,c}$ is the positive subspace of $L_{\omega,c}$ such that 
\begin{enumerate}[\rm(i)]
\item if $-2\sqrt{\omega}<c<2\sqrt{\omega}$, then there exists a positive constant $k>0$ such that for any $p\in\mathcal{P}_{\omega,c}$
\begin{align}
\label{eq:1.9}
\langle L_{\omega,c}p,p\rangle
  \ge k\|p\|_{H^1}^2, 
\end{align}

\item if $c=2\sqrt{\omega}$, then for any $p\in\mathcal{P}_{\omega,c}\setminus\{0\}$
\begin{align}
\label{eq:1.10}
  \langle L_{\omega,c}p,p\rangle
  >0.
\end{align}
\end{enumerate}
\end{proposition}
We prove Proposition~\ref{prop:1.2} by mainly following the argument 
in \cite{LSS13b}. Here we treat the case $c=2\sqrt{\omega}$, which was not considered in previous works. As in the assertion (ii), the coercivity fails for the case $c=2\sqrt{\omega}$ because the essential spectral of $L_{\omega,c}$ consists of the interval $[0,\infty)$ for this case. 

We now state our main result, which concerns the instability of degenerate solitons of \eqref{DNLSb}.
\begin{theorem}\label{thm:1.3}
Let $b>0$ and $c=2\kappa_0\sqrt{\omega}$ and let $\chi_{\omega,c}$ be as in Proposition~\ref{prop:1.2}. Then there exist $\alpha,\beta\in(0,1)$ such that 
if $\ve_0\ce u_0-\phi_{\omega,c}$ for the initial data $u_0\in H^1(\R)$ satisfies
\begin{align}
\label{eq:1.11}
  0<\|\ve_0\|_{H^1}^2
  \le\beta|(\ve_0,\phi_{\omega,c})_{L^2}|,\quad
  \ve_0\perp\{\chi_{\omega,c},i\phi_{\omega,c},\phi_{\omega,c}',i\phi_{\omega,c}'\},  
\end{align}
then there exists $t_0=t_0(u_0)\in\R$ such that the solution $u(t)$ of \eqref{DNLSb} satisfies
\begin{align*}
\inf_{(\theta,y)\in\R^2} \| u(t_0)-e^{i\theta}\phi_{\omega,c}(\cdot -y) \|_{H^1} \ge \alpha.
\end{align*}
In particular, the soliton $u_{\omega,c}$ is unstable. 
\end{theorem}
\begin{remark}
We can construct $\ve_0$ satisfying \eqref{eq:1.11} as follows. One can easily show that the functions $\chi_{\omega,c}$, $i\phi_{\omega,c},\phi_{\omega,c}'$, $\phi_{\omega,c}$, $i\phi_{\omega,c}'$ are linearly independent. Applying the Gram--Schmidt process, we have a function $\ve_1\in H^1(\R)$ satisfying 
\[(\ve_1,\phi_{\omega,c})\ne0,\quad
  \ve_1\perp\{\chi_{\omega,c},i\phi_{\omega,c},\phi_{\omega,c}',i\phi_{\omega,c}'\}. \]
Then $\ve_0\ce \delta \ve_1$ for small $\delta>0$ satisfies \eqref{eq:1.11}.
\end{remark}
\begin{remark}
If we replace the assumption \eqref{eq:1.11} by 
\begin{align*}
0<\|\eps_0\|_{H^1}^2
  \le \beta\l|(\ve_0,i\phi_{\omega,c}')_{L^2}\r|,\quad 
\ve_0\perp\{\chi_{\omega,c},i\phi_{\omega,c},\phi_{\omega,c}',\phi_{\omega,c}\},
\end{align*}
then the conclusion in Theorem \ref{thm:1.3} still holds.
\end{remark}
\begin{remark}
\label{rem:1.6}
In \cite{N20} some explicit function was used as a negative direction of $L_{\omega,c}$ instead of the eigenfunction $\chi_{\omega,c}$. The smallness assumption on $b>0$ in \cite{N20} comes from the construction of a negative direction and the explicit formula is also used for the control of modulation parameters. Although one cannot expect the explicit formula of  $\chi_{\omega,c}$,\footnote{In contexts of \eqref{NLS} and \eqref{eq:gKdV} one can use the explicit eigenfunction for negative eigenvalue of the linearized operator.}
we construct and control modulation parameters by using the scaling properties of the equation. Moreover, we obtain a large set of initial data yielding the instability while in \cite{N20} the only one unstable direction is found.
\end{remark}


For the proof of Theorem~\ref{thm:1.3} we use modulation theory and the virial identity
\begin{align}
\label{eq:1.12}
\frac{d}{dt}\Im\int xu_x(t,x)\ol{u}(t,x) &=4E(u_0),
\end{align}
but we avoid a direct use of this identity. We consider the decomposition
\begin{align}
\label{eq:1.13}
u(t,x)=\frac{e^{i\theta(t)}}{\lambda(t)^{1/2}}\l(\phi_{\omega,c}+\eps \r)\l(t,\frac{x-x(t)}{\lambda(t)} \r),
\end{align}
where $\lambda(t)>0$, $\theta(t)\in \R$, $x(t)\in \R$, and the function $\eps(t,x)$ satisfies suitable orthogonal conditions (see Proposition \ref{prop:3.2}). If we put the formula \eqref{eq:1.13} into \eqref{eq:1.12}, the left-hand side of \eqref{eq:1.12} yields the quantity
\begin{align}
\label{eq:1.14}
  \frac{d}{dt}\Im\int \eps(t,x)\Lambda\phi_{\omega,c}(x)
  \quad \l(\Lambda f\ce\tfrac{f}{2}+xf_x\r),
\end{align}
which plays an essential role in our proof. This quantity has already been effectively used on the studies of the blow-up dynamics of \eqref{NLS} (see, e.g., \cite{MR05}), but it seems to be new in the contexts of \eqref{DNLSb}, \eqref{DNLS} and \eqref{GD}. The quantity \eqref{eq:1.14} is well-defined in the $H^1$-setting, so we do not need any cut-off arguments, which becomes a much simpler argument than previous works \cite{WuKG, GNW, N20}. Moreover, our proof gives a close relation to instability theory on \eqref{eq:gKdV} (see Appendix \ref{sec:A}).

The arguments except one step (Lemma~\ref{lem:3.7}) in our proof work for the case $b=0$ and $c=2\sqrt{\omega}$, i.e., algebraic solitons of \eqref{DNLS}, in exactly the same way.\footnote{For the proof of Lemma \ref{lem:3.7}, we use the coercivity property of $L_{\omega,c}$ which does not hold in the case $c=2\sqrt{\omega}$.}
Although we could not complete the proof of Theorem \ref{thm:1.3} for the case $b=0$, unstable directions are detected in the same way as the case $b>0$ (see Lemma \ref{lem:4.3}). 
Therefore, we believe that the conclusion of Theorem \ref{thm:1.3} is still true for algebraic solitons of \eqref{DNLS}.

In the assumption of Theorem~\ref{thm:1.3}, if we consider the initial data $u_0=\phi_{\omega,c}+\ve_0$ with $(\ve_0, \phi_{\omega,c})_{L^2}>0$, 
then 
\begin{equation}
\label{eq:1.15}
  E(u_0)<0,\quad 
  M(\phi_{\omega,c})
  <M(u_0)
  <M(\phi_{\omega,c})+\beta_0,
\end{equation} 
where $\beta_0$ is a small constant. We note that the condition \eqref{eq:1.15} corresponds to the blow-up set of \eqref{NLS} and \eqref{eq:gKdV}, and so Theorem \ref{thm:1.3} gives an important clue to construct a singular solution of \eqref{DNLSb}.

The rest of this paper is organized as follows. In Section \ref{sec:2} we study the spectra of the linearized operator $L_{\omega,c}$ and prove Proposition~\ref{prop:1.2}. In Section \ref{sec:3} we construct the modulation parameters satisfying suitable orthogonal conditions and control these parameters. In Section \ref{sec:4} we organize the virial identities. In Section \ref{sec:5} we complete the proof of Theorem \ref{thm:1.3} by using the estimates obtained in previous sections.

\section{Structure of the linearized operator}
\label{sec:2}
In this section, we study the structure of the linearized operator $L_{\omega,c}$. Throughout this section, we assume that $(\omega,c)$ satisfies \eqref{eq:1.2}.
For simplicity we often drop the subscript $(\omega,c)$ as
\[S=S_{\omega,c},\quad
  \phi=\phi_{\omega,c},\quad
  \Phi=\Phi_{\omega,c}. \]
We define the function $\eta_{\omega,c}$ as
\begin{align}
\label{eq:2.1}
\eta(x)
  =\eta_{\omega,c}(x)
  =\frac{c}{2}x-\frac14\int_{-\infty}^x\Phi_{\omega,c}(y)^2\,dy,
\end{align}
and define the operator $\tilde L_{\omega ,c}$ as
\[\tilde L
  =\tilde L_{\omega,c}
  =e^{-i\eta_{\omega,c}(x)}L_{\omega,c}e^{i\eta_{\omega,c}(x)}. \]
For $w\in H^1(\mb{R})$ we set $f=\Re w$ and $g=\Im w$.
After a direct calculation, $\tilde{L}w$ is explicitly represented as
\begin{align}
\label{eq:2.2}
  \tilde{L}w
  ={}&\mathopen{}
  -{w}_{xx}
  +\l(\omega -\frac{c^2}{4}\r)w +\frac{c}{2}\Phi^2w +c\Phi^2\Re w 
  -\frac{3}{16}\gamma\Phi^4w
  -\frac{3}{4}\gamma\Phi^4\Re w
\\\notag
  &+\frac{1}{4}\Phi^4\Re w -\frac{i}{2}\Phi^2w_{x}
  +\frac{i}{2}\Phi\Phi'w -2i\Phi\Phi'\Re w
\\\notag
  ={}&L_{11}f
  +L_{12}g
  +\frac{1}{4}\Phi^4f
  +i(L_{21}f
  +L_{22}g),
\end{align}
where
\begin{align*}
  L_{11}
  &\ce-\pt_x^2+U_{\Phi},
  &U_{\Phi}
  &\ce\left(\omega-\frac{c^2}{4}\right)
  +\frac32c\Phi^2
  -\frac{15}{16}\gamma\Phi^4,
\\L_{12} &\ce 
  \frac{1}{2}\Phi^2\del_x
  -\frac{1}{2}\Phi\Phi',
\\L_{21} &\ce 
  -\frac{1}{2}\Phi^2\del_x -\frac{3}{2}\Phi\Phi',
\\L_{22}
  &\ce-\del_x^2 +V_{\Phi},
  &V_{\Phi}
  &\ce\left(\omega -\frac{c^2}{4}\right)
  +\frac{c}{2}\Phi^2  
  -\frac{3}{16}\gamma\Phi^4.
\end{align*}
Since $e^{i\eta(x)}$ is a unitary operator, the spectral property of $\tilde{L}$ is the same as that of $L$. In what follows, we investigate the spectra of the operator $\tilde{L}$. 

We first note that $\tilde{L}$ can be considered as compact perturbation of 
the operator $-\del_x^2+(\omega-c^2/4)$. Therefore, by Weyl's theorem we deduce that
\[
\sigma_{\mr{ess}}(\tilde{L})=\sigma_{\mr{ess}}\Bigl(-\pt_x^2+\Bigl(\omega-\frac{c^2}4\Bigr)\Bigr)
=\Bigl[\omega-\frac{c^2}4,\infty\Bigr)
\]
and the spectrum of $\tilde{L}$ in $(-\infty,\omega-c^2/4)$ consists of isolated eigenvalues.

\subsection{Kernel}
In this subsection we prove the nondegeneracy of the kernel of $\tilde{L}$. Our proof depends on the argument in \cite{LW18}.
\begin{lemma}\label{lem:2.1}
The following statement is true.
\begin{enumerate}[\rm(i)]
\item \label{enumi:i}
$\ker L_{11}=\spn\{\Phi_{\om,c}'\}$,
\item \label{enumi:ii}
$\ker L_{22}=\spn\{\Phi_{\om,c}\}$.
\end{enumerate}
\end{lemma}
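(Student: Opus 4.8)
The plan is to diagonalize each scalar Schr\"odinger operator by exhibiting an explicit kernel element and then invoking standard Sturm--Liouville oscillation theory to rule out further eigenfunctions at energy zero. For $L_{22}=-\partial_x^2+V_\Phi$ with $V_\Phi=(\omega-c^2/4)+\tfrac c2\Phi^2-\tfrac{3}{16}\gamma\Phi^4$, I would first verify directly that $L_{22}\Phi=0$: this is precisely the content of the stationary equation \eqref{eq:1.5} for $\Phi=\Phi_{\omega,c}$, rewritten by moving all terms to one side. Since $\Phi_{\omega,c}>0$ everywhere (it is the positive solution), $\Phi$ is a nowhere-vanishing element of $\ker L_{22}$, hence the ground state; by Sturm oscillation theory the lowest eigenvalue of a one-dimensional Schr\"odinger operator is simple, so $\ker L_{22}$ is one-dimensional, giving $\ker L_{22}=\spn\{\Phi_{\omega,c}\}$. (Here one uses that $\Phi$ and $V_\Phi$ decay at infinity so the discrete spectrum below the essential spectrum $[\omega-c^2/4,\infty)$ is genuine; in the limiting case $c=2\sqrt\omega$ the bottom of the essential spectrum is $0$, and one must check that $\Phi_{\omega,c}\in H^1$ is still an $L^2$ eigenfunction at the edge, which it is since $\Phi_{\omega,2\sqrt\omega}$ has algebraic decay of order $|x|^{-1}$, hence lies in $L^2$.)

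For $L_{11}=-\partial_x^2+U_\Phi$ with $U_\Phi=(\omega-c^2/4)+\tfrac32 c\Phi^2-\tfrac{15}{16}\gamma\Phi^4$, I would obtain the kernel element by differentiating \eqref{eq:1.5} in $x$. Writing \eqref{eq:1.5} as $-\Phi''+(\omega-c^2/4)\Phi+\tfrac c2\Phi^3-\tfrac{3}{16}\gamma\Phi^5=0$ and applying $\partial_x$ gives $-(\Phi')''+(\omega-c^2/4)\Phi'+\tfrac32 c\Phi^2\Phi'-\tfrac{15}{16}\gamma\Phi^4\Phi'=0$, i.e. $L_{11}\Phi'=0$. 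Thus $\Phi_{\omega,c}'\in\ker L_{11}$. Since $\Phi$ is even and has a single nondegenerate maximum at $x=0$, $\Phi'$ has exactly one zero (at the origin), so $\Phi'$ is the \emph{first excited state}: it is the eigenfunction at the second-lowest eigenvalue. The conclusion $\ker L_{11}=\spn\{\Phi'\}$ then follows provided one knows that the second eigenvalue is simple (automatic in 1D) and — crucially — that there is no \emph{other} independent solution of $L_{11}u=0$ in $L^2$; but a second linearly independent solution of the ODE $L_{11}u=0$ cannot be $L^2$ (it grows, by reduction of order / the Wronskian argument, since $\Phi'$ already decays), so the zero eigenspace is exactly one-dimensional.

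The main obstacle is not the algebra — producing $\Phi$ and $\Phi'$ as kernel elements is a one-line differentiation of \eqref{eq:1.5} — but rather justifying that these are \emph{exactly} the kernels, i.e. that no additional $L^2$ solution of $L_{11}u=0$ or $L_{22}u=0$ exists. The clean way, following \cite{LW18}, is the Wronskian/reduction-of-order argument: if $\Phi$ (resp. $\Phi'$) is a solution of the second-order ODE, any linearly independent solution has the form $\Phi(x)\int^x \Phi(s)^{-2}\,ds$, and one checks this integral diverges at the turning points and/or at infinity in a way that forces the second solution out of $L^2$ (for $L_{11}$, $\Phi'$ vanishes at $0$, so one must be slightly careful and patch the reduction-of-order formula across $x=0$, or argue separately on each half-line). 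Equivalently one can cite the oscillation-theoretic fact that the number of zeros of a zero-energy eigenfunction pins down its place in the spectrum and that one-dimensional Schr\"odinger eigenvalues are simple. I would present the Wronskian argument explicitly since it is self-contained and handles both parts uniformly; the degenerate case $c=2\sqrt\omega$ needs only the extra remark that algebraic decay of $\Phi$ still places $\Phi,\Phi'$ in $L^2$ and that the argument ruling out a second $L^2$ solution is unaffected.
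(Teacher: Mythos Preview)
Your proposal is correct and lands on the same key mechanism as the paper (the Wronskian), but the paper's execution is more direct than either route you outline. Rather than invoking Sturm oscillation theory or computing the second solution via reduction of order and checking it fails to lie in $L^2$, the paper simply takes an arbitrary $g\in\ker L_{22}$, forms the Wronskian $W=\Phi'g-\Phi g'$, observes $W'\equiv 0$ from the ODE, and uses that $\Phi,g\in H^2(\R)$ forces $W(x)\to 0$ at infinity; hence $W\equiv 0$ and $g$ is a scalar multiple of $\Phi$. The same argument is asserted for $L_{11}$ with $\Phi'$ in place of $\Phi$. This bypasses the integrability analysis of $\Phi\int^x\Phi^{-2}$ and the patching across the zero of $\Phi'$ that you flag as a worry, at the cost of a small implicit step (when the known kernel element vanishes at a point, concluding global linear dependence from $W\equiv 0$ needs one extra line, which the paper suppresses). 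Your oscillation-theory route is a legitimate alternative and has the advantage of simultaneously locating $0$ in the spectrum (second eigenvalue for $L_{11}$, first for $L_{22}$), information the paper only extracts later in Lemma~\ref{lem:2.4}; the paper's Wronskian argument is more elementary but yields only the kernel.
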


\begin{proof}
Since $\Phi$ is a solution of \eqref{eq:1.4}, we have $L_{22}\Phi=0$. By differentiating the equation \eqref{eq:1.4}, we also have $L_{11}\Phi'=0$. Hence we have
\begin{align*}
  \ker L_{11}\supset\spn\{\Phi'\},\quad
  \ker L_{22}\supset\spn\{\Phi\}.
\end{align*}
It now suffices to show $\ker L_{22}\subset\spn\{\Phi\}$ because one can show $\ker L_{11}\subset\spn\{\Phi'\}$ by the same argument. Let $g\in\ker L_{22}$. We consider the Wronskian of $\Phi$ and $g$:
\[W(x)
  \ce\Phi'(x)g(x)-\Phi(x)g'(x). \]
From $\Phi,g\in H^2(\R)$, we have $W(x)\to0$ as $|x|\to0$. Since $L_{22}\Phi=L_{22}g=0$, we obtain
\[W'(x)
  =\Phi''g-\Phi g''
  =V_{\Phi}\Phi g
  -\Phi V_{\Phi}g
  =0. \]
Thus, we deduce $W\equiv 0$, which implies that $\Phi$ and $g$ are linearly dependent. This completes the proof.
\end{proof}
\begin{lemma}
\label{lem:2.2}
The kernel $\tilde{L}_{\omega ,c}$ is determined by
\begin{align*}
\ker \tilde{L}_{\om,c}=\spn\l\{i\Phi_{\om,c},\Phi_{\om,c}'-\frac{i}{4}\Phi_{\om,c}^3\r\},
\end{align*}
which is equivalent to $\ker L_{\om,c}=\spn\{i\phi_{\om,c},\phi_{\om,c}'\}$.
\end{lemma}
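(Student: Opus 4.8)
The plan is to reduce the computation of $\ker\tilde L_{\omega,c}$ to the two scalar operators $L_{11}$ and $L_{22}$ whose kernels are already identified in Lemma~\ref{lem:2.1}. Writing $w=f+ig$ with $f=\Re w$, $g=\Im w$, the expression \eqref{eq:2.2} shows that $\tilde Lw=0$ is equivalent to the coupled system
\begin{align*}
  L_{11}f+L_{12}g+\tfrac14\Phi^4 f&=0,\\
  L_{21}f+L_{22}g&=0.
\end{align*}
The first thing I would try is a direct verification that the two proposed functions lie in the kernel: for $w=i\Phi$ one has $f=0$, $g=\Phi$, so the system reduces to $L_{12}\Phi=0$ and $L_{22}\Phi=0$; the latter is \eqref{eq:1.5}, and $L_{12}\Phi=\tfrac12\Phi^2\Phi'-\tfrac12\Phi\Phi'\cdot\Phi$... wait, $L_{12}\Phi=\tfrac12\Phi^2\Phi_x-\tfrac12\Phi\Phi'\cdot\Phi$ — one checks it vanishes identically after writing $\Phi^2\Phi'=(\Phi\Phi')\Phi$. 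For $w=\Phi'-\tfrac i4\Phi^3$ one has $f=\Phi'$, $g=-\tfrac14\Phi^3$, and one substitutes into both equations, using $L_{11}\Phi'=0$ (the differentiated equation \eqref{eq:1.5}) and reducing the remaining terms with \eqref{eq:1.5} itself; this is the routine-calculation part. Alternatively, since $\ker L_{\omega,c}\supset\spn\{i\phi_{\omega,c},\phi_{\omega,c}'\}$ is immediate from translation and phase invariance of \eqref{eq:1.4}, I can simply transport these by the gauge $e^{-i\eta}$: a short computation gives $e^{-i\eta}(i\phi)=i\Phi$ and $e^{-i\eta}(\phi')=\Phi'+i\eta'\Phi=\Phi'-\tfrac i4\Phi^2\Phi+\tfrac c2 i\Phi$, which after absorbing the $\tfrac c2 i\Phi$ direction into the span gives the stated generators. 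So I would actually phrase the inclusion "$\supset$" via gauge covariance, which is cleaner and avoids recomputing.

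For the reverse inclusion I would argue as follows. From the second equation, $L_{22}g=-L_{21}f$. The operator $L_{22}$ is self-adjoint with $\ker L_{22}=\spn\{\Phi\}$ by Lemma~\ref{lem:2.1}(ii), so this equation is solvable for $g\in H^1$ modulo $\spn\{\Phi\}$ precisely when $(L_{21}f,\Phi)_{L^2}=0$; integrating by parts, $(L_{21}f,\Phi)=-\tfrac12\int(\Phi^2 f_x+3\Phi\Phi' f)\Phi\,dx$, and after an integration by parts this should reduce to a multiple of $\int \Phi^3\Phi' f\,dx=-\tfrac14\int(\Phi^4)_x f\,dx$, giving a solvability condition of the form $\int(\Phi^4)_x f\,dx=0$, i.e. $f\perp(\Phi^4)_x$. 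Then I would substitute the general solution $g=g_0+\mu\Phi$ (with $g_0$ a particular solution depending linearly on $f$) into the first equation and extract, after pairing against $\Phi'$ — which spans $\ker L_{11}$ — an additional scalar constraint, together with a relation forcing $f$ to lie in a one-dimensional space. Tracking which combinations survive should leave exactly the two-dimensional kernel claimed. The cleanest route may be to observe that $\tilde L$ commutes with the action of the two symmetry groups (phase, translation), that $\dim\ker\tilde L\le 2$ because $\tilde L$ is a $2\times2$ system of Sturm--Liouville type (the essential spectrum starts at $\omega-c^2/4>0$ when $c<2\sqrt\omega$, and at $0$ otherwise, and the relevant ODE system has at most a two-dimensional $L^2$-kernel by an ODE/Wronskian count analogous to the proof of Lemma~\ref{lem:2.1}), and that we have already exhibited two independent elements; hence equality. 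I would present the argument this way, pushing the ODE dimension count as the technical heart.

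The main obstacle I anticipate is the reverse inclusion. Unlike the scalar case treated in Lemma~\ref{lem:2.1}, $\tilde L$ is a genuinely coupled $2\times2$ system, so the simple two-dimensional-$L^2$-solution-space Wronskian argument for second-order scalar ODEs does not apply verbatim; one must either diagonalize/decouple the system (which seems unlikely to have a clean closed form) or run a more careful phase-plane / symplectic count, or else — following \cite{LW18} as the authors indicate — use the specific algebraic structure of $L_{11},L_{12},L_{21},L_{22}$ (in particular the relation between $L_{12}$, $L_{21}$ and $\Phi$) to reduce the system to a single scalar equation for one component after a substitution like $g\mapsto g+(\text{multiple of }\Phi^2)f$ or an integrating-factor change of unknown tied to the gauge $e^{i\eta}$. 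Concretely, undoing the gauge, $\ker\tilde L=e^{-i\eta}\ker L$, and $\ker L_{\omega,c}$ is the $L^2$-kernel of the linearization of \eqref{eq:1.4}; the nondegeneracy of that kernel (spanned by the infinitesimal symmetries $i\phi_{\omega,c},\phi_{\omega,c}'$) is the substantive input, and I expect the proof to hinge on an ODE uniqueness/Wronskian argument in the $\Phi$-variable exactly as in the scalar lemma, the coupling being removable because of the first-order (transport) nature of $L_{12},L_{21}$. I would therefore spend most of the write-up making that decoupling precise and citing Lemma~\ref{lem:2.1} for the resulting scalar kernels.
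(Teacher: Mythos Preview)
Your handling of the inclusion ``$\supset$'' via gauge covariance is exactly what the paper does: differentiate $S'(e^{i\theta}\phi(\cdot-y))=0$ in $\theta$ and $y$ to get $Li\phi=L\phi'=0$, then apply $e^{-i\eta}$ and absorb the $\tfrac{c}{2}i\Phi$ term into the span. So this half is fine.

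For the reverse inclusion, however, your outline has a genuine gap. The Fredholm/solvability route you sketch stalls because the condition $(L_{21}f,\Phi)_{L^2}=0$ is automatically satisfied for every $f$ (integrate by parts: $-\tfrac12\int\Phi^3 f_x-\tfrac32\int\Phi^2\Phi' f=-\tfrac12\int\Phi^3 f_x-\tfrac12\int(\Phi^3)'f=0$), so it imposes no constraint, and you are left with an unspecified particular solution $g_0$ and a vague ``tracking which combinations survive'' that you never close. The alternative you float, a Wronskian dimension bound $\dim\ker\le 2$, is the wrong tool here: for a genuine $2\times 2$ second-order system the solution space is four-dimensional, and there is no cheap a priori reason why at most two solutions can lie in $L^2$.

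What you are missing is the explicit decoupling substitution. The paper (following \cite{LW18}) sets
\[
  g = h - \tfrac12\,\Phi\!\int_{-\infty}^{x}\Phi f\,dy,
\]
which is precisely a choice of particular solution of $L_{22}g=-L_{21}f$, so that $h\in\ker L_{22}$. The point is that with this specific $g_0$, the first equation simplifies not to a constrained problem for $f$ but \emph{exactly} to $L_{11}f+\tfrac12\Phi(\Phi h_x-\Phi' h)=0$; since $h=\alpha\Phi$ by Lemma~\ref{lem:2.1}(ii), the bracket vanishes identically and one gets $L_{11}f=0$, whence $f=\beta\Phi'$ by Lemma~\ref{lem:2.1}(i). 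Substituting back gives $g=\alpha\Phi-\tfrac{\beta}{4}\Phi^3$, and the kernel is recovered on the nose. You anticipated that ``a substitution like $g\mapsto g+(\text{multiple of }\Phi^2)f$'' should work; the correct object is the nonlocal primitive above, and once written down the argument is four lines rather than a phase-plane count.
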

\begin{proof}
First we show $\ker \tilde{L}\supset\on{span}\l\{i\Phi,\Phi'-\frac{i}{4}\Phi^3\r\}$.
Since $\phi$ is a solution of \eqref{eq:1.3}, and the equation has symmetries under the phase and spatial translation, we have $S'(e^{i\theta}\phi(\cdot-y))=0$ for all $(\theta,y)\in\R\times\R$.
Differentiating this with respect to $\theta$ or $y$ at $(\theta,y)=0$, we have
\begin{equation}\label{eq:2.3}
Li\phi=0,\quad
L\phi'=0,
\end{equation}
respectively.
Since $e^{-i\eta(x)}L=\tilde{L}e^{-i\eta(x)}$ and $\phi=e^{i\eta(x)}\Phi$,
\eqref{eq:2.3} is equivalent to 
\[
\tilde{L}i\Phi=0,\quad
\tilde{L}\Big(\Phi'+i\frac{c}{2}\Phi-\frac{i}{4}\Phi^3\Big)=0.
\]
This implies $\ker \tilde{L}\supset\on{span}\l\{i\Phi,\Phi'-\frac{i}{4}\Phi^3\r\}$.

Next we show the inverse inclusion.
Let ${w}\in\ker \tilde{L}$, $f=\Re{w}$, and $g=\Im{w}$.
The expression \eqref{eq:2.2} of $\tilde{L}$ implies that $(f,g)$ satisfies the following system of ordinary differential equations:
\begin{align}
\begin{cases}
\displaystyle L_{11}f+L_{12}g+\frac{1}{4}\Phi^4 f =0, \\[8pt]
L_{21}f+L_{22}g=0.
\end{cases}
\label{eq:2.4}
\end{align}

Now we apply the following transformation to $g$:
\begin{align}
\label{eq:2.5}
g= h-\frac{1}{2}\Phi\int_{-\infty}^{x} \Phi f\,dy.
\end{align}
Then we have
\begin{align}
L_{12}g
+\frac14\Phi^4f
\label{eq:2.6}
&=\frac12\Phi^2g_x-\frac12\Phi\Phi'g
+\frac14\Phi^4f  \\
&=\frac12\Phi^2h_x
-\frac12\Phi\Phi'h.
\notag
\end{align}
Moreover, noting that
\begin{align*}
  \pt_x^2\left(\frac12\Phi\int_{-\infty}^x\Phi f\,dy\right)
  &=\frac12\Phi''\int_{-\infty}^x\Phi f\,dy
  +\frac{3}{2}\Phi\Phi'f
  +\frac{1}{2}\Phi^2f_{x}
\\&=\frac12\Phi''\int_{-\infty}^x\Phi f\,dy
  -L_{21}f,
\end{align*}
it follows from $L_{22}\Phi=0$ that
\begin{align} \label{eq:2.7}
L_{21}f+L_{22}g
&=L_{21}f
+L_{22}h
+\pt_x^2\left(\frac12\Phi\int_{-\infty}^x\Phi f\,dy\right)
-\frac12V_{\Phi}\Phi\int_{-\infty}^x\Phi f\,dy  \\
&=L_{22}h-\frac12(-\Phi''+V_{\Phi}\Phi)\int_{-\infty}^x\Phi f\,dy 
=L_{22}h.
\notag
\end{align}
Using \eqref{eq:2.6} and \eqref{eq:2.7} we write the equation \eqref{eq:2.4} as
\begin{align}\label{eq:2.8}
\left\{\begin{aligned}
  &L_{11}f+\frac{1}{2}\Phi\l( \Phi h_x-\Phi'h\r) =0, 
\\&L_{22}h=0.
\end{aligned}\right.
\end{align}
From the second equation in \eqref{eq:2.8} and Lemma~\ref{lem:2.1}~\eqref{enumi:ii}, we have $h=\alpha\Phi$ for some $\alpha\in\mb{R}$.
Substituting this into the first equation in \eqref{eq:2.8}, we get $L_{11}f=0$.
Therefore, Lemma~\ref{lem:2.1}~\eqref{enumi:i} implies that $f=\beta\Phi'$ for some $\beta\in\mb{R}$.
Substituting $h=\alpha\Phi$ and $f=\beta\Phi'$ into \eqref{eq:2.5}, we have
\begin{align*}
g
=\alpha\Phi
-\frac{\beta}{2}\Phi\int_{-\infty}^x\Phi\Phi'\,dy
=\alpha\Phi
-\frac{\beta}{4}\Phi\int_{-\infty}^x(\Phi^2)'\,dy
=\alpha\Phi
-\frac{\beta}{4}\Phi^3.
\end{align*}
Therefore, we obtain that
\begin{align*}
  w&=f+ig
  =\beta\Phi' +i\l(\alpha\Phi -\frac{\beta}{4}\Phi^3 \r) 
\\&=\alpha i\Phi +\beta\l( \Phi' -\frac{i}{4}\Phi^3\r)
  \in\spn\l\{ i\Phi, \Phi' -\frac{i}{4}\Phi^3 \r\}.
\end{align*}
This completes the proof.
\end{proof}

\subsection{Construction of a negative direction}
In this subsection we prove that $\tilde{L}_{\omega ,c}$ has exactly one negative eigenvalue. Our proof depends on the argument in \cite{LSS13b} (see also \cite{GW95}). The following expression of the quadratic form is useful to construct a negative direction.
\begin{lemma}
\label{lem:2.3}
Let ${w} \in H^1(\R)$, $f=\Re{w}$, and $g=\Im {w}$. Then we have
\begin{align}
\label{eq:2.9}
\langle \tilde{L}_{\om,c}{w},{w}\rangle
=\langle L_{11}f,f\rangle 
+ \frac{1}{4}\lVert\Phi_{\om,c}^2 f+2\Phi_{\om,c}\del_x(\Phi_{\om,c}^{-1}g)\rVert_{L^2}^2.
\end{align}
\end{lemma}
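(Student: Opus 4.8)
The plan is to prove the identity \eqref{eq:2.9} by a direct computation, carefully tracking the cross terms between $f$ and $g$ that appear in the quadratic form $\langle \tilde L w, w\rangle$, and recognizing that after integration by parts they reorganize into a perfect square. Using the expression \eqref{eq:2.2} for $\tilde L w$, I would write
\[
\langle \tilde L w, w\rangle
= \langle L_{11}f, f\rangle + \tfrac14(\Phi^4 f, f) + \langle L_{22}g, g\rangle + \langle L_{12}g, f\rangle + \langle L_{21}f, g\rangle,
\]
where I have used that the imaginary part of $\tilde L w$ pairs against $g = \Im w$ and the real part against $f = \Re w$, and that the mixed terms $L_{12}g$ and $L_{21}f$ contribute because $\tilde L$ is self-adjoint but not diagonal in $(f,g)$.

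The core of the argument is then to show that
\[
\tfrac14(\Phi^4 f, f) + \langle L_{22}g, g\rangle + \langle L_{12}g, f\rangle + \langle L_{21}f, g\rangle
= \tfrac14\lVert \Phi^2 f + 2\Phi \partial_x(\Phi^{-1}g)\rVert_{L^2}^2.
\]
First I would expand the right-hand side: it equals $\tfrac14(\Phi^4 f, f) + (\Phi^2 f, \Phi\partial_x(\Phi^{-1}g)) + \lVert \Phi\partial_x(\Phi^{-1}g)\rVert_{L^2}^2$. So the identity reduces to matching $\langle L_{22}g, g\rangle$ with $\lVert \Phi\partial_x(\Phi^{-1}g)\rVert_{L^2}^2$, and matching $\langle (L_{12}+L_{21}^*)g, f\rangle$ — i.e. the total $f$–$g$ cross term — with $(\Phi^2 f, \Phi\partial_x(\Phi^{-1}g)) = (\Phi^3 f, \partial_x(\Phi^{-1}g))$. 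For the first matching, I would use that $\Phi\partial_x(\Phi^{-1}g) = g_x - (\Phi'/\Phi)g$, so $\lVert \Phi\partial_x(\Phi^{-1}g)\rVert_{L^2}^2 = \lVert g_x\rVert_{L^2}^2 - 2(g_x, (\Phi'/\Phi)g) + ((\Phi'/\Phi)^2 g, g)$; the middle term integrates by parts to $((\Phi'/\Phi)' g, g) = ((\Phi''/\Phi) - (\Phi'/\Phi)^2)g,g)$, and combining gives $\lVert g_x\rVert_{L^2}^2 + ((\Phi''/\Phi)g, g)$. Since $\Phi$ solves \eqref{eq:1.5}, $\Phi''/\Phi = (\omega - c^2/4) + \tfrac{c}{2}\Phi^2 - \tfrac{3}{16}\gamma\Phi^4 = V_\Phi$, which is exactly $\langle L_{22}g, g\rangle$. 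For the cross term, I would expand $\langle L_{12}g, f\rangle + \langle L_{21}f, g\rangle$ using $L_{12} = \tfrac12\Phi^2\partial_x - \tfrac12\Phi\Phi'$ and $L_{21} = -\tfrac12\Phi^2\partial_x - \tfrac32\Phi\Phi'$, integrate by parts to move all derivatives onto $g$, and check it equals $(\Phi^3 f, \partial_x(\Phi^{-1}g)) = (\Phi^3 f, \Phi^{-1}g_x - \Phi^{-2}\Phi'g) = (\Phi^2 f, g_x) - (\Phi\Phi' f, g)$.

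I do not expect a genuine obstacle here — the lemma is a bookkeeping identity — but the step requiring the most care is the integration-by-parts accounting in the cross term: there are several terms of the form $(\Phi^2 g_x, f)$, $(\Phi^2 f_x, g)$, $(\Phi\Phi' f, g)$, and one must use $(\Phi^2 f_x, g) = -( \partial_x(\Phi^2 g), f) = -(\Phi^2 g_x, f) - 2(\Phi\Phi' g, f)$ to consolidate everything into the single target expression, with no leftover terms. One should also note at the outset that all boundary terms vanish since $\Phi, \Phi', f, g$ decay (or, in the algebraic case $c = 2\sqrt\omega$, since $\Phi^3 f \Phi^{-1} g = \Phi^2 f g$ and the products decay fast enough for $f, g \in H^1$); this justifies the integrations by parts. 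Once the square is identified, the lemma follows immediately.
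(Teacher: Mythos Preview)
Your proposal is correct and follows essentially the same strategy as the paper: decompose $\langle \tilde L w, w\rangle$ into diagonal and cross terms, identify $\langle L_{22}g,g\rangle$ with $\|\Phi\partial_x(\Phi^{-1}g)\|_{L^2}^2$ via the relation $L_{22}\Phi=0$, and reduce the cross terms by integration by parts to $(\Phi^2 f, g_x)-(\Phi\Phi' f, g)$. The only cosmetic difference is that the paper introduces the substitution $\tilde g=\Phi^{-1}g$ at the outset, which streamlines the $L_{22}$ computation slightly (one writes $L_{22}(\Phi\tilde g)$ and uses $L_{22}\Phi=0$ directly), whereas you expand $\Phi\partial_x(\Phi^{-1}g)$ and recover $V_\Phi=\Phi''/\Phi$; both routes are equivalent and equally rigorous.
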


\begin{proof}
First, by the expression~\eqref{eq:2.2}, we have
\begin{align*}
\tbra[\tilde{L}{w} ,{w}] =\tbra[L_{11}f,g] +\tbra[L_{12}g,f] 
+\frac{1}{4}\tbra[ \Phi^4 f,f]+\tbra[L_{21}f,g] +\tbra[L_{22}g,g].
\end{align*}
We set $\tilde{g}=\Phi^{-1}g$. It follows from $L_{22}\Phi=0$ that
\begin{align*}
\tbra[L_{22}g,g]
&=\tbra[\tilde{g}(-\pt_x^2+V_\Phi)\Phi ,\Phi\tilde{g}] 
-\tbra[2\Phi'\tilde{g}_{x}+\Phi\tilde{g}_{xx} ,\Phi\tilde{g}]\\
&=-\tbra[\del_x(\Phi^2\tilde{g}_{x}) ,\tilde{g}]
=\|\Phi\tilde{g}_{x}\|_{L^2}^2.
\notag
\end{align*}
Next, we calculate the interaction terms as
\begin{equation*}
\tbra[L_{12}g ,f] 
=\Big\langle\frac{1}{2}\Phi^2\del_x(\Phi\tilde g)-\frac{1}{2}\Phi^2\Phi'\tilde g,f\Big\rangle
=\frac12\langle\Phi^3,f\tilde{g}_{x}\rangle
\end{equation*}
and
\begin{align*}
  \tbra[L_{21}f, g] 
  &=-\bigg\langle\frac{1}{2}\Phi^2f_{x}
  +\frac{3}{2}\Phi\Phi' f ,\Phi\tilde g\bigg\rangle
\\&=-\frac12\langle\Phi^3,\tilde gf_{x}\rangle
  -\frac12\langle\del_x(\Phi^3),f\tilde g\rangle
  =\frac12\langle\Phi^3,f\tilde{g}_{x}\rangle.
\notag
\end{align*}
Therefore we deduce that
\begin{align*}
\tbra[\tilde{L}{w} ,{w}] 
&=\tbra[L_{11}f,g]
+\frac{1}{4}\tbra[ \Phi^4 f,f]
+\langle\Phi^3,f\tilde{g}_{x}\rangle
+\|\Phi\tilde{g}_{x}\|_{L^2}^2 \\
&=\tbra[L_{11}f,f] 
+ \frac{1}{4}\|\Phi^2 f+2\Phi\tilde{g}_{x}\|_{L^2}^2.
\end{align*}
This completes the proof.
\end{proof}

\begin{lemma} \label{lem:2.4}
The operator $L_{11}$ has exactly one negative eigenvalue.
\end{lemma}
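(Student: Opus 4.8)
The plan is to show that $L_{11} = -\partial_x^2 + U_\Phi$, a Schr\"odinger operator on $L^2(\R)$, has exactly one negative eigenvalue by combining two facts: (i) by Lemma~\ref{lem:2.1}\eqref{enumi:i}, $\ker L_{11} = \spn\{\Phi'\}$, and (ii) since $\Phi$ is a positive even solution of \eqref{eq:1.5}, $\Phi'$ has exactly one zero (at $x=0$), being positive on $(-\infty,0)$ and negative on $(0,\infty)$. By Sturm oscillation theory for one-dimensional Schr\"odinger operators, the $n$-th eigenfunction (counting from the bottom, $n=0,1,2,\dots$) has exactly $n$ zeros; since $\Phi'$ is the eigenfunction for eigenvalue $0$ and has exactly one zero, $0$ is the \emph{second} eigenvalue of $L_{11}$. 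Hence $L_{11}$ has exactly one eigenvalue below $0$, and since $\sigma_{\mathrm{ess}}(L_{11}) = [\omega - c^2/4, \infty) \subset [0,\infty)$ (as $\omega > c^2/4$ on the relevant parameter range, with equality only in the borderline case which still gives essential spectrum in $[0,\infty)$), there is exactly one negative eigenvalue.

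First I would record that $U_\Phi \to \omega - c^2/4 \ge 0$ as $|x|\to\infty$ and $U_\Phi$ is a smooth bounded potential, so the standard Sturm--Liouville theory applies: the discrete spectrum below the essential spectrum is simple, bounded below, and the eigenfunction associated to the $n$-th eigenvalue (in increasing order) has exactly $n$ nodes. Next I would invoke Lemma~\ref{lem:2.1}\eqref{enumi:i} to identify $\Phi'$ as an eigenfunction with eigenvalue $0$, and then analyze the nodal structure of $\Phi'$: from the explicit form of $\Phi_{\omega,c}$ (or more robustly, from the fact that $\Phi$ is a positive even function with $\Phi''(0) < 0$ that decays monotonically as $|x|$ increases — a consequence of the ODE \eqref{eq:1.5} and uniqueness of the ground state of that equation), one sees $\Phi'$ vanishes only at $x=0$. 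Combining, $0$ is the eigenvalue with index $n=1$, so exactly one eigenvalue (the ground-state eigenvalue, $n=0$) lies strictly below $0$. Finally I would note that since $0$ is itself in the spectrum as an eigenvalue and all of $[\omega - c^2/4,\infty)$ is essential spectrum lying in $[0,\infty)$, no negative essential spectrum is possible, so the count of one negative eigenvalue is exact.

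The main obstacle is establishing rigorously that $\Phi'$ has exactly one zero, i.e., the monotonicity of $\Phi$ away from the origin. For $-2\sqrt{\omega} < c < 2\sqrt{\omega}$ this is transparent from the explicit $\cosh$-profile; for $c = 2\sqrt{\omega}$ it follows from the explicit rational profile $\Phi_{\omega,c}(x)^2 = 4c/((cx)^2+\gamma)$, which is strictly decreasing in $|x|$. If one prefers an argument not relying on the explicit formula, one can instead argue via the ODE: $\Phi > 0$, $\Phi$ even, $\Phi(\pm\infty)=0$, and any interior critical point of $\Phi$ other than $x=0$ would, together with the first-order conservation law (Hamiltonian) associated to \eqref{eq:1.5}, force $\Phi$ to be periodic or to have a second zero, contradicting $\Phi \in H^1(\R)$ positive; hence $\Phi'$ vanishes only at $0$. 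Either route is routine, so I would cite the explicit formula for brevity. The remaining ingredients (Sturm oscillation theory, Weyl's theorem for the essential spectrum, already noted in the excerpt) are standard and require no new work.
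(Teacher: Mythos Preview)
Your proposal is correct and follows essentially the same approach as the paper: identify the essential spectrum via Weyl's theorem, use $L_{11}\Phi'=0$ together with the fact that $\Phi'$ has exactly one zero, and then invoke Sturm--Liouville oscillation theory to conclude that zero is the second eigenvalue, hence there is exactly one negative eigenvalue. The paper's proof is terser (it simply asserts that $\Phi'$ has one zero point and cites Sturm--Liouville theory), while you supply more detail on the nodal structure of $\Phi'$; but the underlying argument is the same.
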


\begin{proof}
We note that $L_{11}$ is a compact perturbation of the operator $-\pt_x^2+(\omega-c^2/4)$. Therefore, by Weyl's theorem we deduce that
\[
\sigma_{\mr{ess}}(L_{11})=\sigma_{\mr{ess}}\Bigl(-\pt_x^2+\Bigl(\omega-\frac{c^2}4\Bigr)\Bigr)
=\Big[\omega-\frac{c^2}4,\infty\Big),
\]
and the spectrum of $L_{11}$ in $(-\infty,\omega-c^2/4)$ consists of isolated eigenvalues.
We note that $L_{11}\Phi'=0$ and that $\Phi'$ has exactly one zero point. By Sturm--Liouville theory we deduce that zero is the second eigenvalue of $L_{11}$, and that $L_{11}$ has one negative eigenvalue. Moreover, one can prove that the negative eigenvalue is simple (see, e.g., \cite[Theorem~B.59]{A09}).
This completes the proof.
\end{proof}

We denote the negative eigenvalue of $L_{11}$ in Lemma~\ref{lem:2.4} by $\lambda_{11}$ and its normalized eigenvector by $\chi_{11}$, that is,
\begin{equation} \label{eq:2.10}
L_{11}\chi_{11}
=\lambda_{11}\chi_{11}, \quad
\|\chi_{11}\|_{L^2}=1.
\end{equation}

\begin{lemma} \label{lem:2.5}
The operator $\tilde{L}_{\om,c}$ has exactly one negative eigenvalue.
\end{lemma}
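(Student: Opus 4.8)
\textbf{Plan for the proof of Lemma~\ref{lem:2.5}.}
The strategy is to count negative eigenvalues of the quadratic form $\langle \tilde L w,w\rangle$ using the decomposition in Lemma~\ref{lem:2.3}:
\[
\langle \tilde{L}w,w\rangle
=\langle L_{11}f,f\rangle
+ \frac{1}{4}\lVert\Phi^2 f+2\Phi\del_x(\Phi^{-1}g)\rVert_{L^2}^2,
\qquad f=\Re w,\ g=\Im w.
\]
The second term is nonnegative, so negativity of $\langle \tilde L w,w\rangle$ can only be produced by the $L_{11}$ part. By Lemma~\ref{lem:2.4}, $L_{11}$ has exactly one negative eigenvalue $\lambda_{11}$ with normalized eigenvector $\chi_{11}$. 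First I would establish existence of at least one negative direction for $\tilde L$: the natural candidate is $w_0=\chi_{11}+ig_0$, where $g_0$ is chosen to make the second term vanish, i.e. solving $\Phi^2\chi_{11}+2\Phi\del_x(\Phi^{-1}g_0)=0$. This is a first-order linear ODE for $\Phi^{-1}g_0$, solvable explicitly by integration, and one must check the resulting $g_0$ lies in $H^1(\R)$ using the exponential (or algebraic, when $c=2\sqrt\omega$) decay of $\Phi$ and the fact that $\chi_{11}$ decays like $\Phi$; a compatibility/integrability condition at $x=+\infty$ may need to be arranged, possibly by adjusting an additive constant or invoking $\int \Phi\chi_{11}\,dx$ — this is the one place requiring a small computation. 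Then $\langle \tilde L w_0,w_0\rangle=\langle L_{11}\chi_{11},\chi_{11}\rangle=\lambda_{11}<0$, so $\tilde L$ has at least one negative eigenvalue.

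For the upper bound — that $\tilde L$ has \emph{at most} one negative eigenvalue — I would argue by contradiction. Suppose $\tilde L$ were negative definite on a two-dimensional subspace $V\subset H^1(\R)$. The map $w=f+ig\mapsto f$ sends $H^1$ onto (a dense subspace of) the real $H^1$; its kernel consists of purely imaginary functions $w=ig$, on which Lemma~\ref{lem:2.3} gives $\langle\tilde L w,w\rangle=\frac14\|2\Phi\del_x(\Phi^{-1}g)\|_{L^2}^2\ge 0$, and this is $\le 0$ only if $\Phi^{-1}g$ is constant, i.e. $w\in\spn\{i\Phi\}\subset\ker\tilde L$ (using Lemma~\ref{lem:2.2}). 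Hence on the two-dimensional $V$, where the form is strictly negative, the real-part map is injective, so $\{\,\Re w : w\in V\,\}$ is a two-dimensional subspace of $H^1(\R)$. Choosing $w\in V$ with $\Re w=f$, we get
\[
0>\langle\tilde L w,w\rangle\ge\langle L_{11}f,f\rangle,
\]
so $L_{11}$ would be negative definite on a two-dimensional subspace, contradicting Lemma~\ref{lem:2.4}. Therefore $\tilde L$ has at most one negative eigenvalue, and combined with the previous paragraph, exactly one. Since the quadratic form $\langle \tilde L\cdot,\cdot\rangle$ is bounded below by $\langle L_{11}\cdot,\cdot\rangle$ on real parts and the essential spectrum starts at $\omega-c^2/4>0$ (strictly positive except possibly at the endpoint case), the min-max principle then identifies this negative value as a genuine isolated simple eigenvalue.

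\textbf{Main obstacle.} The delicate point is constructing $g_0$ with $\Phi^2\chi_{11}+2\Phi\del_x(\Phi^{-1}g_0)=0$ and verifying $g_0\in H^1(\R)$; equivalently $\del_x(\Phi^{-1}g_0)=-\tfrac12\Phi\chi_{11}$, so $\Phi^{-1}g_0(x)=C-\tfrac12\int_{-\infty}^x\Phi\chi_{11}\,dy$, and one needs the behaviour of this primitive times $\Phi$ at both infinities to be controlled. Because $\chi_{11}$ is the ground state of $L_{11}$ it decays at the same exponential rate as $\Phi$ (or the appropriate algebraic rate when $c=2\sqrt\omega$), which makes $\Phi\chi_{11}$ integrable and the product $g_0=\Phi\big(C-\tfrac12\int_{-\infty}^x\Phi\chi_{11}\big)$ decay suitably; the constant $C$ can be fixed (if needed) so the bracket does not destroy decay at $+\infty$. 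Everything else — the variational counting argument and the appeal to Weyl's theorem for the essential spectrum — is routine given Lemmas~\ref{lem:2.1}--\ref{lem:2.4}.
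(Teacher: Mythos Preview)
Your proof is correct and follows essentially the same route as the paper: the negative direction $w_0=\chi_{11}+ig_0$ with $g_0=-\tfrac12\Phi\int_{-\infty}^x\Phi\chi_{11}$ is exactly the paper's $\chi_*=\chi_{11}+i\chi_{12}$, and your variational ``at most one'' argument (project a two-dimensional negative subspace to real parts and contradict Lemma~\ref{lem:2.4}) is the abstract form of the paper's concrete step of combining two eigenvectors to make the real part orthogonal to $\chi_{11}$. The issue you flag as the main obstacle is not a genuine difficulty: the paper simply takes $C=0$, and since $\chi_{11}$ decays exponentially (as an eigenfunction below the essential spectrum of $L_{11}$) while $\Phi\in H^1(\R)$, the product $g_0=\Phi\cdot(\text{bounded primitive})$ lies in $H^1(\R)$ with no compatibility condition needed at $+\infty$.
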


\begin{proof}
Let 
\[\chi_{12}
  \ce-\frac12\Phi\int_{-\infty}^x\Phi\chi_{11}\,dy. \]
Then we have
\[
\Phi\pt_x(\Phi^{-1}\chi_{12})
=-\frac12\Phi^2\chi_{11}.
\]
Therefore, it follows from \eqref{eq:2.9} and \eqref{eq:2.10} that 
$
\chi_{*}\ce
\chi_{11}
+i\chi_{12}
$
satisfies
\[
\langle \tilde{L}\chi_*,\chi_*\rangle
=\langle L_{11}\chi_{11},\chi_{11}\rangle
=\lambda_{11}<0.
\]
This means that the operator $\tilde{L}$ has at least one negative eigenvalue.

Now we show that $\tilde{L}$ has exactly one negative eigenvalue. Assume that $\tilde{L}$ has two negative eigenvalues (including repeats) $\lambda_1\le\lambda_2<0$ with eigenvectors $\chi_1$ and $\chi_2$ such that
\begin{align*}
\tilde{L}\chi_1=\lambda_1\chi_1,\quad
\tilde{L}\chi_2=\lambda_2\chi_2,\quad
\|\chi_1\|_{L^2}
=\|\chi_2\|_{L^2}
=1,\quad
(\chi_1,\chi_2)_{L^2}=0.
\end{align*}
We note that by the formula~\eqref{eq:2.9} and Lemma~\ref{lem:2.4}, $\langle \tilde{L}p,p\rangle\ge0$ for each $p\in H^1(\R)$ satisfying $(\Re p,\chi_{11})_{L^2}=0$. Thus, it follows from $\langle \tilde{L}\chi_2,\chi_2\rangle=\lambda_2<0$ that $(\Re\chi_2,\chi_{11})_{L^2}\ne0$. If we set
\[
\alpha=-\dfrac{(\Re\chi_1,\chi_{11})_{L^2}}{(\Re\chi_2,\chi_{11})_{L^2}},\quad
p_0=\chi_1+\alpha\chi_2,
\]
then we have $(\Re p_0,\chi_{11})_{L^2}=0$. Hence we deduce that $\langle \tilde{L}p_0,p_0\rangle\ge0$. 
On the other hand, by a direct calculation we obtain
\[
\langle \tilde{L}p_0,p_0\rangle
=\lambda_1+\alpha^2\lambda_2
<0,
\]
which yields a contradiction. This completes the proof.
\end{proof}
\begin{remark}
\label{rem:2.6}
When $b\ge 0$, by variational characterization of the solitons (see \cite{CO06, FHI17, H19}) one can prove that $L_{\omega ,c}$ has exactly one negative eigenvalue (see the argument of \cite{LW18}). Our approach based on the formula \eqref{eq:2.9} is more elementary and applicable to the case $b<0$ in a unified way.
\end{remark}

\subsection{Spectral decomposition}
We now complete the proof of Proposition~\ref{prop:1.2}.
\begin{proof}[Proof of Proposition~\ref{prop:1.2}]
By Lemma~\ref{lem:2.2} and Lemma \ref{lem:2.5}, we have the following decomposition
\begin{align}
\label{eq:2.11}
H^1(\mb{R})
=\spn\{\tilde\chi\}
\oplus\spn\l\{i\Phi,\Phi'-\frac{i}{4}\Phi^3\r\}
\oplus\tilde{\mc{P}},
\end{align}
where $\tilde\chi$ is the eigenvector of $\tilde{L}$ corresponding to its negative eigenvalue $\lambda$ and $\tilde{\mc{P}}$ is the nonnegative subspace of $\tilde{L}$.
Since $\tilde{L}=e^{-i\eta(x)}Le^{i\eta(x)}$,
\eqref{eq:2.11} is equivalent that
\begin{align}
\label{eq:2.12}
H^1(\mb{R})
=\mc{N}
\oplus \mc{Z}
\oplus \mc{P},
\end{align}
where $\mc{N}$ is spanned by the negative eigenvector $\chi\ce e^{i\eta(x)}\tilde\chi$ of $L$, $\mc{Z}\ce\spn\{i\phi,\phi'\}$ is its kernel, and $\mc{P}\ce e^{i\eta(x)}\tilde{\mc{P}}$ is its nonnegative subspace. The rest of the proof is to show the positivity of $L$ on $\mc{P}$. 

(i) We consider the case $-2\sqrt{\omega}<c<2\sqrt{\omega}$.
Since $\sigma_{\rm ess}(L)=[\omega-c^2/4,\infty)$, the spectra of $L$ except for its negative eigenvalue and zero eigenvalue are positive and bounded away from zero. 
Therefore, there exists a positive constant $\delta_0>0$ such that 
\begin{align}
\label{eq:2.13}
\langle Lp,p\rangle\ge \delta_0\|p\|_{L^2}^2
\quad\text{for all}~p\in\mc{P}.
\end{align}
From the explicit formula \eqref{eq:1.8}, there exists a positive constant $C_0$ such that
\begin{align*}
  \tbra[Lv,v]
  \ge\frac12\|v_x\|_{L^2}^2
  -C_0\|v\|_{L^2}^2
\end{align*}
for all $v\in H^1(\R)$. 
Combined with \eqref{eq:2.13}, we have 
\begin{align*}
  \|p\|_{H^1}^2
  \le 2\tbra[Lp,p]
  +(1+2C_0)\|p\|_{L^2}^2
  \le\l(2+\frac{1+2C_0}{\delta_0}\r)\tbra[Lp,p]
\end{align*}
for all $p\in\mc{P}$, which shows the desired inequality \eqref{eq:1.9}.

(ii) We now consider the case $c=2\sqrt{\omega}$.
 Assume by contradiction that there exists $p_0\in\mc{P}$ such that $\|p_0\|_{L^2}=1$ and $\langle Lp_0,p_0\rangle=0$.
Then we obtain the following relation:
\[\langle Lp_0,p_0\rangle
  =\min\{\langle Lp,p\rangle\colon\,
  \lVert p\rVert_{L^2}=1,\ 
  (\chi,p)_{L^2}
  =(i\phi,p)_{L^2}
  =(\phi',p)_{L^2}
  =0\}. \]
This minimization problem implies that there exist Lagrange multipliers $\alpha_1$, $\alpha_{2}$, $\alpha_{3}$, and $\alpha_4$ such that
\begin{align*}
Lp_0=\alpha_1\chi+\alpha_{2}i\phi+\alpha_{3}\phi'+\alpha_{4}p_0.
\end{align*}
By the orthogonal conditions and $\langle Lp_0,p_0\rangle=0$, we have $\alpha_{1}=\alpha_{2}=\alpha_3=\alpha_4=0$. Therefore, $p_0\in\ker L\cap\mc{P}=\{0\}$, which is a contradiction. Hence \eqref{eq:1.10} holds.
\end{proof}

\section{Modulation theory} 
\label{sec:3}

In this section we organize modulation theory for three fundamental symmetries which are phase, translation, and scaling. 

We prepare some notations. For $\alpha>0$ we define a tubular neighborhood around the soliton 
$\phi_{\omega,c}$ by
\begin{align*}
  U_{\alpha}=
  \{u\in H^1(\R)\colon\,
  \inf_{(\theta,z)\in\R^2}\|e^{i\theta}u(\cdot +z)-\phi_{\om,c}\|_{H^1}<\alpha\}.
\end{align*}
For $u\in H^1(\R)$, $\lambda>0$, and $\theta,y\in\R$, we denote the function $\eps$ by
\begin{align*}
  \eps(\lambda,\theta,x;u) 
  =\lambda^{1/2}e^{-i\theta}u(\lambda\cdot +x)-\phi_{\om,c}.
\end{align*}
For $\lambda>0$ and $f:\R\to\C$, we define the rescaling
\begin{equation*}
 f^\lambda (y)=\lambda^{1/2}f(\lambda y).
\end{equation*}
Let $\Lambda$ be the generator of this transformation as
\begin{equation*}
  \Lambda f
  \ce\del_{\lambda}f^\lambda|_{\lambda=1}
  =\frac{f}{2}
  +yf_y. 
\end{equation*}
We note that $\Lambda$ is skew-symmetric, i.e., 
\begin{align*}
  (\Lambda f ,g)_{L^2}
  =-(f,\Lambda g)_{L^2}. 
\end{align*}

\subsection{Construction of modulation parameters} \label{sec:3.1}
We construct the modulation parameters $\lambda$, $\theta$, and $x$ satisfying suitable orthogonal conditions. We first prepare the following lemma.
\begin{lemma}\label{lem:3.1}
Assume that $(\omega,c)$ satisfy $\eqref{eq:1.2}$. Then we have
\begin{enumerate}[\rm(i)]
\item $(\Lambda\phi_{\omega,c},i\phi_{\omega,c})_{L^2}=(\Lambda\phi_{\omega,c},\phi_{\omega,c}')_{L^2}
  =0. $
\end{enumerate}
If we further assume $b\ge 0$ and $c=2\kappa_0(b)\sqrt{\omega}$, then we have
\begin{enumerate}[\rm(i)]
\setcounter{enumi}{1}
\item 
$(i\phi_{\omega,c}',\Lambda\phi_{\omega,c})_{L^2}
  =(i\phi_{\omega,c}',\phi_{\omega,c})_{L^2}=0$, 
\item 
$(\Lambda\phi_{\omega,c},\chi_{\omega ,c})_{L^2}
  \neq 0$.
\end{enumerate}
\end{lemma}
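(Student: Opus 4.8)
The plan is to exploit the symmetries and scaling structure of the soliton, together with the spectral decomposition from Proposition~\ref{prop:1.2}, reducing each identity to an algebraic consequence of the conservation laws evaluated along the scaling curve $\omega\mapsto(\omega,2\kappa\sqrt{\omega})$. The skew-symmetry of $\Lambda$ and the fact that $i\phi_{\omega,c}$, $\phi_{\omega,c}'$ span $\ker L_{\omega,c}$ will be the main structural inputs.

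For part (i), the plan is to differentiate the scaling identity $\phi_{\omega,2\kappa\sqrt\omega}(x)=\omega^{1/4}\phi_{1,2\kappa}(\sqrt\omega x)$ in $\omega$, which shows (up to a constant) that $\partial_\omega\phi_{\omega,c}$ along a fixed-$\kappa$ curve is a multiple of $\Lambda\phi_{\omega,c}$ plus a term coming from $\partial_\omega c$; more directly, $\Lambda\phi_{\omega,c}$ is exactly the infinitesimal generator of the scaling that fixes $\kappa$ up to lower-order corrections. Then I would use the quadratic-form identities: since $M(\phi_{\omega,c})$ and $P(\phi_{\omega,c})$ are smooth in $(\omega,c)$, and since $M'(\phi)=2\phi$, $P'(\phi)=2i\phi'$ (as elements of the dual), pairing $\Lambda\phi_{\omega,c}$ against $\phi_{\omega,c}$ and $i\phi_{\omega,c}'$ produces derivatives of the conserved quantities along the scaling curve. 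For the two inner products in (i), a cleaner route: $(\Lambda\phi,i\phi)_{L^2}$ is (up to a constant) $\tfrac{d}{d\lambda}\|{\phi^\lambda}\|$-type quantity paired with $i\phi$, but in fact both $i\phi$ and $\phi'$ are real-orthogonal to $\phi$ and to $\Lambda\phi$ by direct parity/integration-by-parts arguments — one can verify $(\Lambda\phi,i\phi)_{L^2}=\tfrac14\tfrac{d}{d\lambda}\big|_{\lambda=1}M(\phi^\lambda)\cdot(\text{something})=0$ since $M(\phi^\lambda)\equiv M(\phi)$, and $(\Lambda\phi,\phi')_{L^2}$ vanishes by a symmetrization in $x$ after integration by parts.

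For parts (ii) and (iii), the degeneracy condition $E(\phi_{1,2\kappa_0})=P(\phi_{1,2\kappa_0})=0$ enters. By scaling, $P(\phi_{\omega,2\kappa_0\sqrt\omega})=0$ and $E(\phi_{\omega,2\kappa_0\sqrt\omega})=0$ for all $\omega>0$. Differentiating $P(\phi_{\omega,2\kappa_0\sqrt\omega})=0$ in $\omega$ gives $\langle P'(\phi),\partial_\omega\phi\rangle=0$ along the curve, i.e. $(i\phi',\partial_\omega\phi)_{L^2}=0$; since $\partial_\omega\phi$ along the fixed-$\kappa_0$ curve equals $\tfrac{1}{2\omega}\Lambda\phi$ (from the scaling relation with $\kappa_0$ fixed), this gives $(i\phi',\Lambda\phi)_{L^2}=0$. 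The identity $(i\phi',\phi)_{L^2}=0$ is just $\tfrac12 P(\phi)=0$, which holds at $\kappa_0$. For (iii), I would argue by contradiction: if $(\Lambda\phi,\chi_{\omega,c})_{L^2}=0$, then $\Lambda\phi$ lies in $\mathcal{Z}_{\omega,c}\oplus\mathcal{P}_{\omega,c}$ where $L_{\omega,c}\ge 0$; but by the scaling identity $\phi_{\omega,c}=\omega^{1/4}\phi_{1,2\kappa_0}(\sqrt\omega\cdot)$ differentiated in $\omega$, combined with $E(\phi)=0$ along the curve, one computes $\langle L_{\omega,c}\Lambda\phi,\Lambda\phi\rangle$ explicitly and finds it is negative — essentially because $\tfrac{d}{d\lambda}\big|_{\lambda=1}S_{\omega,c}(\phi^\lambda)$ relates to $4\omega^{-1}E(\phi)=0$ at first order while the second derivative picks up a strictly negative mass-type term, reflecting $L^2$-criticality (cf. the analogous computation for \eqref{NLS}/\eqref{eq:gKdV}). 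This contradicts $L_{\omega,c}\ge 0$ on $\mathcal{Z}\oplus\mathcal{P}$, so $(\Lambda\phi,\chi_{\omega,c})_{L^2}\neq 0$.

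The main obstacle I expect is part (iii): one must show $\langle L_{\omega,c}\Lambda\phi_{\omega,c},\Lambda\phi_{\omega,c}\rangle<0$, which requires carefully extracting the right second-order term from the Pohozaev/scaling identities. The computation of $L_{\omega,c}(\Lambda\phi)$ is not one of the generators of the kernel, so one needs to differentiate the stationary equation \eqref{eq:1.4} under the $\kappa_0$-fixed scaling, and then use both $E(\phi_{\omega,c})=0$ and $P(\phi_{\omega,c})=0$ to kill the leading terms, leaving a manifestly negative remainder proportional to $-M(\phi_{\omega,c})<0$. Parts (i) and (ii) are essentially bookkeeping with parity and the conservation of mass/momentum along the scaling curve, so the real content is the strict negativity in (iii), which is exactly the analytic fingerprint of the $L^2$-critical degenerate soliton.
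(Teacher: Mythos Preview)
Your contradiction framework for (iii) matches the paper's, but the central computation is wrong: you assert $\langle L_{\omega,c}\Lambda\phi,\Lambda\phi\rangle<0$, whereas it equals exactly zero. Twice differentiating the scaling identity
\[
S_{\omega,c}(\phi^\lambda)=\lambda^2E(\phi)+\tfrac{\omega}{2}M(\phi)+\lambda\tfrac{c}{2}P(\phi)
\]
at $\lambda=1$ and using $S_{\omega,c}'(\phi)=0$ gives $\langle L_{\omega,c}\Lambda\phi,\Lambda\phi\rangle=2E(\phi)=0$ at the degenerate point. There is no ``strictly negative mass-type remainder proportional to $-M(\phi)$'': the mass term is constant in $\lambda$ --- that is exactly what $L^2$-criticality means --- and the same vanishing $\langle L\Lambda Q,\Lambda Q\rangle=0$ holds in the \eqref{NLS} and \eqref{eq:gKdV} settings you invoke. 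Hence the contradiction cannot be ``nonnegative versus negative''. The paper instead uses that, by part (i), $\Lambda\phi$ is already orthogonal to the kernel, so $(\Lambda\phi,\chi)_{L^2}=0$ would place $\Lambda\phi$ in $\mathcal{P}_{\omega,c}\setminus\{0\}$; Proposition~\ref{prop:1.2} then gives the \emph{strict} inequality $\langle L_{\omega,c}\Lambda\phi,\Lambda\phi\rangle>0$, contradicting $=0$. Your weaker placement ``$\Lambda\phi\in\mathcal{Z}_{\omega,c}\oplus\mathcal{P}_{\omega,c}$ where $L\ge0$'' does not suffice, since equality with zero is permitted there.

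A smaller issue in (i): differentiating $M(\phi^\lambda)\equiv M(\phi)$ yields $(\Lambda\phi,\phi)_{L^2}=0$, not $(\Lambda\phi,i\phi)_{L^2}=0$; these are the real and imaginary parts of $\int\Lambda\phi\,\bar\phi$, and mass invariance controls only the former. The paper (consistent with your ``parity'' fallback) writes $\phi=e^{i\eta}\Phi$ with $\Phi$ real and even and checks directly that the integrands in $(\Lambda\phi,i\phi)_{L^2}$ and $(\Lambda\phi,\phi')_{L^2}$ are odd. Your route for (ii), differentiating $P(\phi_{\omega,2\kappa_0\sqrt\omega})\equiv0$ along the scaling curve and using $\partial_\omega\phi=\tfrac{1}{2\omega}\Lambda\phi$, is correct and equivalent to the paper's one-line computation $(i\phi',\Lambda\phi)_{L^2}=\tfrac12P(\phi)+\Re\int iy|\phi'|^2=0$.
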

\begin{proof}
(i) It follows from the explicit formula of $\eta$ (see \eqref{eq:2.1}) that
\begin{align*}
\eta'
  &=\frac{c}{2}-\frac{1}{4}\Phi^2,
\\
\phi'
  &=e^{i\eta}\l(i\eta'\Phi+\Phi'\r)
  =e^{i\eta}\l(i\frac{c}{2}\Phi-\frac{i}{4}\Phi^3+\Phi'\r).
\end{align*}
Since $\Phi$ is a real-valued and even function, one computes easily that
\begin{align*}
  &(\Lambda\phi,i\phi)_{L^2}
  =\l(\tfrac{\phi}{2}+y\phi',i\phi\r)_{L^2}
  =(y\phi',i\phi)_{L^2}
\\&\quad=\Re\int y\l(i\frac{c}{2}\Phi-\frac{i}{4}\Phi^3+\Phi'\r)\cdot(-i\Phi)
  =\Re\int y\l(\frac{c}{2}\Phi^2-\frac{1}{4}\Phi^4\r)
  =0,
\\&(\Lambda\phi,\phi')_{L^2}
  =\l(\tfrac{\phi}{2}+y\phi',\phi'\r)_{L^2}
\\&\quad=(y\phi',\phi')_{L^2}
  =\Re\int y\biggl\{(\Phi')^2+\biggl(\frac{c}{2}\Phi-\frac14\Phi^3\biggr)^2\biggr\}
  =0.
\end{align*}

\noindent
(ii) 
Since $P(\phi)=0$ by $c=2\kappa_0\sqrt{\omega}$, we have 
\begin{align*}
  (i\phi',\Lambda\phi)_{L^2}
  =(i\phi',\tfrac{\phi}{2}+y\phi')_{L^2}
  =\Re\int iy|\phi'|^2=0.
\end{align*}

\noindent
(iii) Suppose that $(\Lambda\phi,\chi)_{L^2}=0$. From (i) proved just above and Proposition~\ref{prop:1.2}, we obtain $\langle L\Lambda\phi,\Lambda\phi\rangle>0$. On the other hand, by twice differentiating the relation
\begin{align*}
  S(\phi^\lam)
  =\lam^2E(\phi)+\frac{\om}{2} M(\phi)+\frac{\lam c}{2}P(\phi)
\end{align*}
at $\lambda=1$, we have $\langle L\Lambda\phi,\Lambda\phi\rangle=2E(\phi)$. 
Since $E(\phi)=0$ by $c=2\kappa_0\sqrt{\omega}$, 
we deduce that $\langle L\Lambda\phi,\Lambda\phi\rangle=0$. This is a contradiction. 
\end{proof}

The next proposition is the foundation of the modulation analysis.
\begin{proposition}\label{prop:3.2}
Let $b\ge0$ and $c=2\kappa_0\sqrt{\omega}$. Then there exist constants $\alpha_0>0,\lambda_0>0$, and $C^1$-mappings $(\lambda, \theta, x)\colon U_{\alpha_0}\to (1-\lambda_0 ,1+\lambda_0)\times\R^2$ such that 
for all $u\in U_{\alpha_0}$, $\eps(u)\ce  \eps(\lambda(u),\theta(u),x(u);u)$ satisfies
\begin{align}\label{eq:3.1}
  \begin{aligned}
  (\eps(u), \chi_{\om,c})_{L^2}
  &=(\eps(u), i\phi_{\om,c})_{L^2}=(\eps(u), \phi_{\om,c}')_{L^2}=0.
\end{aligned}
\end{align}
Moreover, there exists a constant $C>0$ such that for any $\alpha\in (0,\alpha_0)$ and $u\in U_{\alpha}$
\begin{align} \label{eq:3.2}
  \| \eps(u)\|_{H^1}\le C\alpha,\quad
  |\lambda(u) -1|\le C\alpha.
\end{align}
\end{proposition}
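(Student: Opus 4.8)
The plan is to prove Proposition~\ref{prop:3.2} by a standard implicit function theorem argument, applied to the map measuring the three orthogonality conditions as a function of the modulation parameters $(\lambda,\theta,x)$. First I would define, for $u\in H^1(\R)$ near $\phi_{\omega,c}$ and $(\lambda,\theta,x)$ near $(1,0,0)$, the vector-valued function
\begin{align*}
  F(u,\lambda,\theta,x)
  \ce\bigl(
  (\eps(\lambda,\theta,x;u),\chi_{\omega,c})_{L^2},\,
  (\eps(\lambda,\theta,x;u),i\phi_{\omega,c})_{L^2},\,
  (\eps(\lambda,\theta,x;u),\phi_{\omega,c}')_{L^2}
  \bigr)\in\R^3,
\end{align*}
where $\eps(\lambda,\theta,x;u)=\lambda^{1/2}e^{-i\theta}u(\lambda\cdot+x)-\phi_{\omega,c}$. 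When $u=\phi_{\omega,c}$ and $(\lambda,\theta,x)=(1,0,0)$ we have $\eps=0$, so $F(\phi_{\omega,c},1,0,0)=0$. The map $F$ is $C^1$ jointly in all variables because the action of the symmetry group $(\lambda,\theta,x)\mapsto \lambda^{1/2}e^{-i\theta}u(\lambda\cdot+x)$ is $C^1$ from $(0,\infty)\times\R^2$ into $H^1(\R)$ when $u\in H^1$ (differentiating in $\lambda$ produces the $\Lambda$ term, which is bounded $H^1\to L^2$; differentiating in $\theta$ and $x$ is likewise controlled), and the $L^2$-pairings are bounded bilinear.

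Next I would compute the partial Jacobian $\partial_{(\lambda,\theta,x)}F$ at the base point $(u,\lambda,\theta,x)=(\phi_{\omega,c},1,0,0)$. Differentiating $\eps$ at that point gives $\partial_\lambda\eps=\Lambda\phi_{\omega,c}$, $\partial_\theta\eps=-i\phi_{\omega,c}$, and $\partial_x\eps=\phi_{\omega,c}'$. Hence the Jacobian matrix is
\begin{align*}
  \begin{pmatrix}
  (\Lambda\phi_{\omega,c},\chi_{\omega,c})_{L^2} & -(i\phi_{\omega,c},\chi_{\omega,c})_{L^2} & (\phi_{\omega,c}',\chi_{\omega,c})_{L^2}\\
  (\Lambda\phi_{\omega,c},i\phi_{\omega,c})_{L^2} & -(i\phi_{\omega,c},i\phi_{\omega,c})_{L^2} & (\phi_{\omega,c}',i\phi_{\omega,c})_{L^2}\\
  (\Lambda\phi_{\omega,c},\phi_{\omega,c}')_{L^2} & -(i\phi_{\omega,c},\phi_{\omega,c}')_{L^2} & (\phi_{\omega,c}',\phi_{\omega,c}')_{L^2}
  \end{pmatrix}.
\end{align*}
By Lemma~\ref{lem:3.1}(i) the entries $(\Lambda\phi_{\omega,c},i\phi_{\omega,c})_{L^2}$ and $(\Lambda\phi_{\omega,c},\phi_{\omega,c}')_{L^2}$ vanish, and since $\phi_{\omega,c}\in\ker L_{\omega,c}$ is orthogonal to the negative eigenvector $\chi_{\omega,c}$ we have $(i\phi_{\omega,c},\chi_{\omega,c})_{L^2}=(\phi_{\omega,c}',\chi_{\omega,c})_{L^2}=0$; also $(i\phi_{\omega,c},\phi_{\omega,c}')_{L^2}=-P(\phi_{\omega,c})/\|\cdots\|$ which is $0$ here since $c=2\kappa_0\sqrt{\omega}$. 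So the matrix is lower/upper triangular with diagonal entries $(\Lambda\phi_{\omega,c},\chi_{\omega,c})_{L^2}$, $-\|\phi_{\omega,c}\|_{L^2}^2$, $\|\phi_{\omega,c}'\|_{L^2}^2$. The first is nonzero by Lemma~\ref{lem:3.1}(iii), and the other two are manifestly nonzero, so the Jacobian is invertible. The implicit function theorem then yields $\alpha_0,\lambda_0>0$ and $C^1$ maps $(\Lambda,\Theta,X)$ on the ball of radius $\alpha_0$ around $\phi_{\omega,c}$ in $H^1$ with $F(u,\Lambda(u),\Theta(u),X(u))=0$, which is \eqref{eq:3.1}; I would then note that this extends to the tubular neighborhood $U_{\alpha_0}$ by composing with the phase/translation that brings a general $u\in U_{\alpha_0}$ within $\alpha_0$ of $\phi_{\omega,c}$ in $H^1$ (and absorbing that phase/translation into $\Theta,X$), shrinking $\alpha_0$ if necessary.

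Finally, for the quantitative bound \eqref{eq:3.2}: for $u\in U_\alpha$ pick $(\theta_*,z_*)$ with $\|e^{i\theta_*}u(\cdot+z_*)-\phi_{\omega,c}\|_{H^1}<\alpha$, so the implicit-function solution satisfies $|\Lambda(u)-1|+|\Theta(u)-\theta_*|+|X(u)-z_*|\le C'\|e^{i\theta_*}u(\cdot+z_*)-\phi_{\omega,c}\|_{H^1}$ by the Lipschitz bound on the implicit map coming from the uniform invertibility of the Jacobian near the base point; then $\eps(\Lambda(u),\Theta(u),X(u))$ differs from $e^{i\theta_*}u(\cdot+z_*)-\phi_{\omega,c}$ by the action of a group element $O(\alpha)$-close to the identity applied to something $O(\alpha)$-small, hence $\|\eps\|_{H^1}\le C\alpha$. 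I do not expect a serious obstacle here; the one point requiring a little care is the regularity of the group action on $H^1$ (one cannot differentiate in $\lambda$ or $x$ and land back in $H^1$, only in $L^2$ after one derivative, or $H^{-1}$), so the implicit function theorem must be set up with $F$ valued in $\R^3$ — which only sees $L^2$-pairings against fixed $H^1$ (indeed Schwartz-class) functions — and there the $C^1$ dependence is genuine. That is the main technical point to get right; everything else is bookkeeping.
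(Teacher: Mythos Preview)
Your approach is essentially identical to the paper's: define the $\R^3$-valued map $F$, check $F(\phi_{\omega,c},1,0,0)=0$, compute the Jacobian in $(\lambda,\theta,x)$ at the base point, use Lemma~\ref{lem:3.1} together with the orthogonality of $\ker L_{\omega,c}$ to $\chi_{\omega,c}$ to reduce it to a diagonal matrix with nonzero entries $(\Lambda\phi_{\omega,c},\chi_{\omega,c})_{L^2}$, $-\|\phi_{\omega,c}\|_{L^2}^2$, $\|\phi_{\omega,c}'\|_{L^2}^2$, apply the implicit function theorem on a ball around $\phi_{\omega,c}$, deduce the Lipschitz bounds \eqref{eq:3.2}, and then extend to the tubular neighborhood. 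One small slip: it is $i\phi_{\omega,c}$ and $\phi_{\omega,c}'$ (not $\phi_{\omega,c}$ itself) that lie in $\ker L_{\omega,c}$ and are therefore orthogonal to $\chi_{\omega,c}$; and the vanishing of $(\phi_{\omega,c}',i\phi_{\omega,c})_{L^2}=-P(\phi_{\omega,c})$ is exactly Lemma~\ref{lem:3.1}(ii), so your off-diagonal analysis is correct once that typo is fixed.
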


\begin{proof}
Let $F\colon(0,\infty)\times\R^2\times H^1(\R)\to\R$ be the function defined by
\begin{align*}
F(\lambda,\theta,x;u)=
  \l[\begin{array}{@{}c@{}}
  (\eps(\lambda,\theta,x;u),\chi)_{L^2} \\
  (\eps(\lambda,\theta,x;u) ,i\phi)_{L^2} \\
  (\eps(\lambda,\theta,x;u) ,\phi')_{L^2} \end{array}\r].
\end{align*}
We define the open neighborhoods $V_{\alpha}$ of $\phi$ and $\Omega_{\delta}\subset (0,\infty)\times\R^2$ of $(1,0,0)$ by
\begin{align*}
  V_{\alpha}
  &=\{u\in H^1(\R)\colon\, 
  \|u-\phi\|_{H^1}<\alpha\},
\\\Omega_{\delta}
  &=\{(\lambda,\theta,x)\in(0,\infty)\times\R^2\colon\,
  |\lambda -1|+|\theta|+|x|<\delta\}.
\end{align*}
By the orthogonality $\ker L\perp\spn\{\chi\}$ and Lemma~\ref{lem:3.1}, we have
\begin{align*}
  \frac{\del F}{\del(\lambda,\theta,x)}(1,0,0;\phi)
  &=\begin{bmatrix}
  (\Lambda \phi,\chi)_{L^2} & -(i\phi,\chi)_{L^2} & (\phi',\chi)_{L^2} \\
  (\Lambda \phi,i\phi)_{L^2} & -(i\phi,i\phi)_{L^2} & (\phi',i\phi)_{L^2}  \\
  (\Lambda \phi,\phi')_{L^2} & -(i\phi,\phi')_{L^2} & (\phi',\phi')_{L^2}  
  \end{bmatrix}
\\&=\begin{bmatrix}
  (\Lambda \phi,\chi)_{L^2} & 0 & 0 \\
  0 & -\|\phi\|_{L^2}^2 & 0  \\
  0 & 0 & \|\phi'\|_{L^2}^2 
  \end{bmatrix}.
\end{align*}
Since $(\Lambda \phi,\chi)_{L^2}\neq 0$ by Lemma~\ref{lem:3.1}~(3), we deduce that
\begin{align}
\label{eq:3.3}
 \det\frac{\del F}{\del(\lambda,\theta,x)}(1,0,0;\phi)
 \ne 0.
\end{align}
Combined with $F(1,0,0;\phi)=0$, the implicit function theorem implies that there exist constants $\bar{\alpha}>0$, $\bar{\delta}>0$, and $C^1$-mappings $(\lambda,\theta,x)\colon V_{\bar{\alpha}}\to\Omega_{\bar{\delta}}$ such that
\begin{align}
\label{eq:3.4}
  F(\lambda (u), \theta(u), x(u);u) 
  =0\quad
  \text{for all } u\in V_{\bal}
\end{align}
and 
\begin{align}\label{eq:3.5}
  |\lambda(u)-1|
  +|\theta(u)|
  +|x(u)|
  \cleq\|u-\phi\|_{H^1}\quad 
  \text{for all } u\in V_{\bar{\alpha}}. 
\end{align}
By the expression of $\eps(u)$ and \eqref{eq:3.5}, one can compute easily that
\begin{align*}
\|\eps(u)\|_{H^1} \cleq \| u-\phi\|_{H^1} \quad \text{for}\ u\in V_{\bal}.
\end{align*}
In particular, for $\alpha\in(0,\bar{\alpha})$ we have
\begin{align}
\label{eq:3.6}
 \|\eps(u)\|_{H^1}\cleq \alpha,\quad
 |\lambda(u) -1| \cleq \alpha
\quad\text{for}\ u\in V_{\alpha}.
\end{align}
By possibly choosing $\alpha$ smaller, we can extend the functions $\lambda(u)$, $\theta(u)$, and $x(u)$ to the tubular neighborhood $U_\alpha$ (see, e.g., \cite{L09} for more details). This completes the proof.
\end{proof}
\subsection{Control of the modulation parameters}
\label{sec:3.2}

Now we derive the equation for $\eps$ and estimate on the modulation parameters.  

Let $u_0\in U_{\alpha_0}$ and $u(t)$ be the solution of \eqref{DNLSb} with $u(0)=u_0$. We denote the exit times from the tubular neighborhood $U_\alpha$ by
\begin{align*}  
  T_{\alpha}^{\pm}
  &=\inf\{t>0\colon\,
  u(\pm t)\notin U_{\alpha}\}.
\end{align*}
We set $I_{\alpha}=(-T_{\alpha}^{-},T_{\alpha}^{+})$. Since $u(t)\in U_{\alpha_0}$ for $t\in I_{\alpha_0}$, we can define
\begin{align}
\label{eq:3.7}
  \lambda(t)
  \ce\lambda(u(t)),\quad
  \theta(t)
  \ce\theta(u(t)),\quad
  x(t)
  \ce x(u(t)),
\end{align}
where the each function in the right-hand sides is given in Proposition~\ref{prop:3.2}. We see that $\lambda(t)$, $\theta(t)$, and $x(t)$ are $C^1$-functions on $I_{\alpha_0}$. For $t\in I_{\alpha_0}$ we denote
\begin{align}
\label{eq:3.8}
  v(t)
  =v(t,y)
  =\lambda(t)^{1/2}e^{-i\theta(t)}u(t,\lambda(t)y+x(t))
\end{align}
and define the function $\eps(t)$ by
\begin{align}
\label{eq:3.9}
  \eps(t)
  =\ve(\lambda(t),\theta(t),x(t);u(t))=v(t)-\phi_{\om,c}.
\end{align}

We rescale the time as follows. We set
\begin{align*}
  \tilde s(t)
  &=\int_0^t\frac{d\tau}{\lambda(\tau)^2},\quad
  \tilde{I}_{\alpha_0}
  =\tilde s(I_{\alpha_0}).
\end{align*}
Obviously $t\mapsto\tilde s(t)$ is strictly increasing, so the inverse function $\tilde{t}\ce\tilde s^{-1}$ exists. For a function $I_{\alpha_0}\ni t\mapsto f(t)$, we define $\tilde{I}_{\alpha_0}\ni s\mapsto\tilde{f}(s)$ by
\begin{align*}
  \tilde{f}(s)
  =f(\tilde{t}(s)).
\end{align*}
We note that 
\begin{align}
\label{eq:3.10}
 \tilde{f}_s(s) =f_{t}(t)\lambda(t)^2\quad\text{for }s=\tilde{s}(t).
\end{align}
For simplicity of notations, in what follows we omit ``tilde'' over the functions of the variable $s$ although it is the same symbol as the function of the variable $t$.

\begin{lemma}
\label{lem:3.3}
For $s\in I_{\alpha_0}$, $\eps(s)$ satisfies
\begin{align}
\label{eq:3.11}
  i\ve_s
  ={}L\ve
  &+(\theta_s-\om)\phi_{\om,c}
  +\l(\frac{x_s}{\lam}-c\r)i\phi_{\om,c}'
  +\frac{\lambda_s}{\lam}i\Lambda\phi_{\om,c}
\\\notag
  &+(\theta_s-\om)\ve
  +\l(\frac{x_s}{\lam}-c\r)i\ve_y
  +\frac{\lambda_s}{\lam}i\Lambda\ve
  +R(\ve),
\end{align}
where $R(\eps)$ is the sum of second and higher order terms of $\ve$ explicitly written as 
\begin{align*}
  R(\ve)
  ={}&\mathopen{}
  -i|\ve|^2\phi_{\om,c}'-2i\Re(\ve\ol{\phi_{\om,c}})\ve_y
  -4b\{\Re(\ve\ol{\phi_{\om,c}})\}^2\phi_{\om,c} -2b|\phi_{\om,c}|^2|\ve|^2\phi_{\om,c}
\\ \notag
  &-4b|\phi_{\om,c}|^2\Re(\ve\ol{\phi_{\om,c}})\ve
  -i|\ve|^2\ve_y
  -4b|\ve|^2\Re(\ve\ol{\phi_{\om,c}})\phi_{\om,c}-4b\{\Re(\ve\ol{\phi_{\om,c}})\}^2\ve
\\ \notag
  &-2b|\phi_{\om,c}|^2|\ve|^2\ve
  -b|\ve|^4\phi_{\om,c}-4b|\ve|^2\Re(\ve\ol{\phi_{\om,c}})\ve
  -b|\ve|^4\ve,
\end{align*}
and there exists $C>0$ such that 
\begin{equation}\label{eq:3.12}
  \int|R(\ve)|\le C(\|\ve\|_{L^2}^2+\|\ve\|_{L^2}\|\ve_y\|_{L^2})\quad
  \text{for $\ve\in H^1(\R)$ with $\|\ve\|_{H^1}\le 1$.}
\end{equation} 
\end{lemma}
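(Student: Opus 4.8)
\textbf{Proof proposal for Lemma~\ref{lem:3.3}.}

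The plan is to differentiate the definitions \eqref{eq:3.8}--\eqref{eq:3.9} directly, substitute the equation \eqref{DNLSb} (in its Hamiltonian form \eqref{eq:1.2}), and collect terms. First I would write $u(t,x)=\lambda^{-1/2}e^{i\theta}v(t,(x-x)/\lambda)$ and compute $iu_t$ by the chain rule, producing four types of contributions: the genuine time derivative $iv_s/\lambda^2$ (after rescaling time via \eqref{eq:3.10}), the phase term $-\theta_s\lambda^{-2}v$ coming from $\partial_t e^{i\theta}$, the scaling term $\lambda_s\lambda^{-3}\Lambda v$ coming from $\partial_t(\lambda^{-1/2}v(\cdot/\lambda))$ where the generator $\Lambda$ appears because $\Lambda f=f/2+yf_y$ is exactly $\partial_\lambda(\lambda^{1/2}f(\lambda\cdot))|_{\lambda=1}$, and the translation term $-x_s\lambda^{-2}v_y$ coming from $\partial_t v(\cdot-x(t)/\lambda)$. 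On the other side, $-u_{xx}-i|u|^2u_x-b|u|^4u$ is homogeneous of the right weights under the scaling $u\mapsto\lambda^{1/2}u(\lambda\cdot)$ precisely because \eqref{DNLSb} is $L^2$-critical, so after rescaling it becomes $\lambda^{-2}\bigl(-v_{yy}-i|v|^2v_y-b|v|^4v\bigr)$ evaluated in the $y$ variable, with an extra $ci v_y$-type term absorbed below. Multiplying through by $\lambda^2$ gives an equation for $v$.

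Next I would substitute $v=\phi_{\om,c}+\eps$ and use that $\phi_{\om,c}$ solves the stationary equation \eqref{eq:1.4}, i.e. $-\phi''+\omega\phi+ci\phi'-i|\phi|^2\phi'-b|\phi|^4\phi=0$. Expanding the nonlinearity $N(v)\ce -i|v|^2v_y-b|v|^4v$ around $\phi$, the zeroth-order part cancels against $-\phi''+\omega\phi$ via \eqref{eq:1.4} (the terms $(\theta_s-\omega)\phi$, $(x_s/\lambda-c)i\phi'$, $(\lambda_s/\lambda)i\Lambda\phi$ are what remains after regrouping the modulation contributions with the $\omega\phi$, $ci\phi'$ pieces of the stationary equation); the first-order part in $\eps$ is exactly $-(L-\omega)\eps$ minus the $\omega\eps$, $ci\eps_y$ corrections, and these corrections combine with the modulation terms to yield $(\theta_s-\omega)\eps+(x_s/\lambda-c)i\eps_y+(\lambda_s/\lambda)i\Lambda\eps$; what is left, namely all quadratic and higher terms in $\eps$ from the cubic and quintic nonlinearities, is collected into $R(\eps)$. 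Here I would just expand $|{\phi+\eps}|^2(\phi+\eps)_y$ and $|\phi+\eps|^4(\phi+\eps)$ by multilinearity, discard the zeroth- and first-order pieces (already accounted for), and list the remainder term by term; this reproduces the displayed formula for $R(\eps)$, the only mildly delicate bookkeeping being the real-part structure $\Re(\eps\ol{\phi})$ arising because we regard $L^2(\R)$ as a real Hilbert space and $|v|^2=|\phi|^2+2\Re(\eps\ol\phi)+|\eps|^2$.

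Finally, the estimate \eqref{eq:3.12} is routine: every monomial in $R(\eps)$ contains at least two factors of $\eps$, and either zero or one factor of $\eps_y$. For the terms with no derivative on $\eps$, one uses that $\phi_{\om,c}\in L^\infty\cap L^2$ (exponential or polynomial decay) together with $\|\eps\|_{L^\infty}\lesssim\|\eps\|_{H^1}\le 1$ via the one-dimensional Sobolev embedding, bounding e.g. $\int |\eps|^2|\phi|^4\phi\lesssim\|\eps\|_{L^2}^2$ and the purely $\eps$ terms like $\int|\eps|^6\lesssim\|\eps\|_{L^\infty}^4\|\eps\|_{L^2}^2\lesssim\|\eps\|_{L^2}^2$. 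For the terms carrying one $\eps_y$, one factor of $\eps$ is placed in $L^2$, the derivative factor $\eps_y$ in $L^2$, and the remaining factors (a power of $\phi$, or further powers of $\eps$ controlled in $L^\infty$ by $\|\eps\|_{H^1}\le1$) in $L^\infty$, giving the bound $\|\eps\|_{L^2}\|\eps_y\|_{L^2}$. Summing the finitely many terms yields \eqref{eq:3.12}.

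The computation is entirely mechanical; the step requiring the most care is tracking how the three modulation parameters $\theta_s,x_s,\lambda_s$ enter — in particular getting the signs and the $1/\lambda$ factors right in the coefficients $(\theta_s-\omega)$, $(x_s/\lambda-c)$, $(\lambda_s/\lambda)$ — and correctly regrouping the $\omega\eps$ and $ci\eps_y$ terms (which are hidden inside $L\eps$ through $S''_{\om,c}$) so that they recombine with the modulation terms rather than being double-counted. Once the linear part is organized as $L\eps$ plus those explicit modulation terms, identifying $R(\eps)$ is just a matter of expanding the two nonlinearities and discarding what has already been used.
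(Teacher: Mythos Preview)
Your proposal is correct and follows essentially the same approach as the paper: derive the equation for $v$ in the rescaled variables, substitute $v=\phi_{\omega,c}+\eps$, use the stationary equation \eqref{eq:1.4} and the definition \eqref{eq:1.10} of $L$ to identify the linear part, and collect the quadratic-and-higher remainder into $R(\eps)$, with \eqref{eq:3.12} following from Sobolev embedding. The paper organizes the computation slightly differently (it writes the equation for $v$ first and then substitutes, rather than computing $iu_t$ and scaling), but the content is identical.
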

\begin{proof}
By direct calculations we see that $v(t)$ satisfies the equation
\[i\lam^2v_t
  =-v_{yy}-i|v|^2v_y-b|v|^4v
  +\lambda_t\lambda i\Lambda v
  +\theta_t\lam^2v
  +x_t\lambda iv_y. \]
By rescaling the time and \eqref{eq:3.10}, we have 
\begin{align*}
  i v_s
  &=-v_{yy}
  -i|v|^2 v_y
  -b|v|^4 v
  +\frac{ \lambda_s}{\lam}i\Lambda v 
  +\theta_s v
  +\frac{x_s}{{\lam}}iv_y. 
\end{align*}
By substituting $v(s)=\phi+\ve(s)$, we obtain that
\begin{align}\label{eq:3.13}
	i\eps_s
  &=iv_s
  =- v_{yy}
  -i|v|^2 v_y
  -b|v|^4 v
  +\frac{ \lambda_s}{\lam}i\Lambda v 
  +\theta_s v
  +\frac{ x_s}{{\lam}}iv_y
\\&\notag
  =\begin{aligned}[t]
  -(\phi+\ve)_{yy}
  -i|\phi+\ve|^2(\phi+\ve)_y
  -b|\phi+\ve|^4(\phi+\ve)\qquad
\\{}\notag
  +\frac{ \lambda_s}{\lam}i\Lambda(\phi+\ve) 
  +\theta_s(\phi+\ve)
  +\frac{ x_s}{{\lam}}i(\phi+\ve)_y.
  \end{aligned}
 \end{align}
We now set
\begin{align*}
  R_1(\eps)&=
  -i|\phi+\eps|^2(\phi+\eps)_y+i|\phi|^2\phi'+i|\phi|^2\eps_y+2i\Re (\eps\ol{\phi})\phi' 
\\ \notag
  &=-i|\ve|^2\phi'
  -2i\Re(\ve\ol{\phi})\ve_y-i|\ve|^2\ve_y,
\\
  R_2(\eps)&=
  -b|\phi+\eps|^4(\phi+\eps)+b|\phi|^4\phi+b|\phi|^4\eps +4b|\phi|^2\Re(\ve\ol{\phi})\phi
\\ \notag
  &=-b\Bigl(\begin{aligned}[t]
  &4\{\Re(\ve\ol{\phi})\}^2\phi+|\ve|^4\phi+4|\ve|^2\Re(\ve\ol{\phi})\phi+2|\phi|^2|\ve|^2\phi
\\&+4\{\Re(\ve\ol{\phi})\}^2\ve+|\ve|^4\ve
  +4|\phi|^2\Re(\ve\ol{\phi})\ve+4|\ve|^2\Re(\ve\ol{\phi})\ve+2|\phi|^2|\ve|^2\ve \Bigr),
  \end{aligned}
\end{align*}
and $R(\eps)=R_1(\eps)+R_2(\eps)$. By the Sobolev embedding we have
\[\int(|R_1(\eps)|+|R_2(\eps)|)
  \lesssim \|\eps\|_{L^2}^2+\|\ve\|_{L^2}\|\ve_y\|_{L^2}
  \quad
  \text{for $\ve\in H^1(\R)$ with $\|\ve\|_{H^1}\le 1$.}
  \]
From \eqref{eq:3.13}, we obtain that
\begin{align*}
  i\eps_s &=
  \begin{aligned}[t]
  -(\phi+\ve)_{yy}
  &+R_1(\eps)
  -i|\phi|^2\phi'
  -i|\phi|^2\eps_y
  -2i\Re (\eps\ol{\phi})\phi' 
\\&+R_2(\eps)
  -b|\phi|^4\phi
  -3b|\phi|^4\eps
  -2b|\phi|^2\phi^2\ol{\eps}
\\ \notag 
  &+\frac{ \lambda_s}{\lam}i\Lambda(\phi+\ve) 
  +\theta_s(\phi+\ve)
  +\frac{ x_s}{\lam}i(\phi+\ve)_y
\end{aligned}
\\&=\begin{aligned}[t]
  &\mathopen{}-\ve_{yy}-i|\phi|^2\eps_y-2i\Re (\eps\ol{\phi})\phi'-3b|\phi|^4\eps -2b|\phi|^2\phi^2\ol{\eps}
\\&-\phi'' -i|\phi|^2\phi' -b|\phi|^4\phi
  +\frac{ \lambda_s}{\lam}i\Lambda(\phi+\ve) 
  +\theta_s(\phi+\ve)
  +\frac{x_s}{\lam}i(\phi+\ve)_y+R(\eps).
\end{aligned}
\end{align*}
By using the relations
\begin{align*}
  &-\ve_{yy}-i|\phi|^2\eps_y-2i\Re (\eps\ol{\phi})\phi'-3b|\phi|^4\eps -2b|\phi|^2\phi^2\ol{\eps}
  =L\eps -\omega\eps-ci\eps_y,
\\&-\phi''-i|\phi|^2\phi' -b|\phi|^4\phi
  =-\omega\phi-ci\phi',
\end{align*}
we obtain \eqref{eq:3.11}. 
\end{proof}
We note that from Proposition~\ref{prop:3.2}, 
\begin{align}
  \label{eq:3.14}
  &(\eps(s),\chi_{\om,c})_{L^2}
  =(\eps(s), i\phi_{\om,c})_{L^2}
  =(\eps(s),\phi_{\om,c}')_{L^2}
  =0,
\\ \label{eq:3.15}
  &\|\eps(s)\|_{H^1}\leq C\alpha,\quad
  |\lambda(s)-1|\leq C\alpha
\end{align}
hold for $\alpha\in(0,\alpha_0)$ and $s\in I_{\alpha}$, where $C$ is independent of $\alpha$ and $s$. 
\begin{lemma}
\label{lem:3.4}
Let $b\ge0$ and $c=2\kappa_0\sqrt{\omega}$. For $s\in I_{\alpha_0}$, the following equalities hold.
\begin{align*}
  \frac{\lambda_s}{\lam}(\Lambda\phi_{\om,c},\chi_{\om,c})_{L^2}
  ={}&\mathopen{}-(\ve,L_{\om,c}i\chi_{\om,c})_{L^2}
  -(\theta_s-\om)(\ve,i\chi_{\om,c})_{L^2}
\\& \notag 
  +\l(\frac{x_s}{\lam}-c\r)(\ve,\chi_{\om,c}')_{L^2}
  +\frac{\lambda_s}{\lam}(\ve,\Lambda\chi_{\om,c})_{L^2}
  -(R(\ve),i\chi_{\om,c})_{L^2},
\\[7pt] 
  (\theta_s-\om)\|\phi_{\om,c}\|_{L^2}^2
  ={}&\mathopen{}-(\ve,L_{\om,c}\phi_{\om,c})_{L^2}
  -(\theta_s-\om)(\ve,\phi_{\om,c})_{L^2}
\\& \notag
  -\l(\frac{x_s}{\lam}-c\r)(\ve,i\phi_{\om,c}')_{L^2}
  -\frac{\lambda_s}{\lam}(\ve,\Lambda i\phi_{\om,c})_{L^2}
  -(R(\ve),\phi_{\om,c})_{L^2},
\\[7pt] 
{\l(\frac{x_s}{\lam}-c\r)}\|\phi_{\om,c}'\|_{L^2}^2
  ={}&\mathopen{}-(\ve,L_{\om,c}i\phi_{\om,c}')_{L^2}
  -(\theta_s-\om)(\ve,i\phi_{\om,c}')_{L^2}
\\& \notag
  +\l(\frac{x_s}{\lam}-c\r)(\ve,\phi_{\om,c}'')_{L^2}
  +\frac{\lambda_s}{\lam}(\ve,\Lambda\phi_{\om,c}')_{L^2}
  -(R(\ve),i\phi_{\om,c}')_{L^2}.
\end{align*}
Moreover, there exist $C>0$ and $\alpha_1\in(0,\alpha_0)$ such that for $s\in I_{\alpha_1}$, the following estimate holds.
\begin{align}
 \label{eq:3.16} 
  \l|\frac{\lam_s}{\lam}\r|
  +\l|\theta_s-\om\r|
  +\l|\frac{x_s}{\lam}-c\r|\le C\|\ve(s)\|_{L^2}.
\end{align}
\end{lemma}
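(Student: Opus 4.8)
The plan is to derive the three identities by taking the $L^2$-inner product of the equation \eqref{eq:3.11} for $\eps_s$ against each of the three fixed functions $\chi_{\om,c}$, $i\phi_{\om,c}$, and $i\phi_{\om,c}'$, and then to exploit the orthogonality conditions \eqref{eq:3.14}. First I would differentiate each orthogonality relation in \eqref{eq:3.14} with respect to $s$; since the test functions $\chi_{\om,c}$, $i\phi_{\om,c}$, $\phi_{\om,c}'$ are $s$-independent, this yields $(\eps_s,\chi_{\om,c})_{L^2}=(\eps_s,i\phi_{\om,c})_{L^2}=(\eps_s,\phi_{\om,c}')_{L^2}=0$. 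Equivalently, pairing \eqref{eq:3.11} with $i\chi_{\om,c}$, $\phi_{\om,c}$, $i\phi_{\om,c}'$ respectively, the left-hand side $(i\eps_s,\cdot)_{L^2}$ vanishes in each case (recalling that $(iv,w)_{L^2}=-(v,iw)_{L^2}$ on the real Hilbert space $L^2$). So each identity comes from setting the right-hand side of \eqref{eq:3.11}, paired against the appropriate function, equal to zero.

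The key simplifications are as follows. For the first identity, pair \eqref{eq:3.11} with $i\chi_{\om,c}$: the term $(L\eps,i\chi_{\om,c})_{L^2}$ is rewritten as $(\eps,L i\chi_{\om,c})_{L^2}$ by self-adjointness of $L=L_{\om,c}$; the term $(\theta_s-\om)(\phi_{\om,c},i\chi_{\om,c})_{L^2}$ vanishes because $\phi_{\om,c}\perp i\chi_{\om,c}$ (real and imaginary parts); the term $((x_s/\lam)-c)(i\phi_{\om,c}',i\chi_{\om,c})_{L^2}=((x_s/\lam)-c)(\phi_{\om,c}',\chi_{\om,c})_{L^2}$ survives; the term $(\lam_s/\lam)(i\Lambda\phi_{\om,c},i\chi_{\om,c})_{L^2}=(\lam_s/\lam)(\Lambda\phi_{\om,c},\chi_{\om,c})_{L^2}$ is the left-hand side; the remaining first-order-in-$\eps$ terms give $-(\theta_s-\om)(\ve,i\chi_{\om,c})_{L^2}$, $((x_s/\lam)-c)(\ve,\chi_{\om,c}')_{L^2}$ (after integrating by parts the $i\ve_y$ term and using skew-adjointness of $\del_y$), and $(\lam_s/\lam)(\ve,\Lambda\chi_{\om,c})_{L^2}$ (using skew-adjointness of $\Lambda$); and $R(\ve)$ contributes $-(R(\ve),i\chi_{\om,c})_{L^2}$. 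For the second identity, pair with $\phi_{\om,c}$: here $(i\phi_{\om,c}',\phi_{\om,c})_{L^2}=P(\phi_{\om,c})/1$-type quantity vanishes since $P(\phi_{\om,c})=0$ (because $c=2\kappa_0\sqrt{\om}$), and $(i\Lambda\phi_{\om,c},\phi_{\om,c})_{L^2}=-(\Lambda\phi_{\om,c},i\phi_{\om,c})_{L^2}=0$ by Lemma~\ref{lem:3.1}(i), leaving $(\theta_s-\om)\|\phi_{\om,c}\|_{L^2}^2$ as the surviving leading term. For the third, pair with $i\phi_{\om,c}'$: here $(\phi_{\om,c},i\phi_{\om,c}')_{L^2}=0$ again by $P(\phi_{\om,c})=0$ and $(i\Lambda\phi_{\om,c},i\phi_{\om,c}')_{L^2}=(\Lambda\phi_{\om,c},\phi_{\om,c}')_{L^2}=0$ by Lemma~\ref{lem:3.1}(i), leaving $((x_s/\lam)-c)\|\phi_{\om,c}'\|_{L^2}^2$. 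In each case the remaining $\eps$-linear terms are moved to the right and the second-order remainder $R(\ve)$ is kept as is.

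For the quantitative estimate \eqref{eq:3.16}, I would view the three identities as a linear system for the vector $(\lam_s/\lam,\ \theta_s-\om,\ (x_s/\lam)-c)$. The coefficient matrix of the leading terms is lower/diagonal-dominant: its diagonal entries are $(\Lambda\phi_{\om,c},\chi_{\om,c})_{L^2}$, $\|\phi_{\om,c}\|_{L^2}^2$, $\|\phi_{\om,c}'\|_{L^2}^2$, all nonzero — the first by Lemma~\ref{lem:3.1}(3), the other two trivially — while the off-diagonal contributions all carry a factor $(\ve,\cdot)_{L^2}$, hence are $O(\|\ve\|_{L^2})$ and in particular $O(\alpha)$ on $I_\alpha$ by \eqref{eq:3.15}. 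Choosing $\alpha_1$ small enough that this perturbed matrix is invertible with bounded inverse, and using $|(\ve,L_{\om,c}i\chi_{\om,c})_{L^2}|\lesssim\|\ve\|_{L^2}$, $|(\ve,L_{\om,c}\phi_{\om,c})_{L^2}|\lesssim\|\ve\|_{L^2}$, $|(\ve,L_{\om,c}i\phi_{\om,c}')_{L^2}|\lesssim\|\ve\|_{L^2}$ (since $L_{\om,c}$ applied to the fixed smooth rapidly-decaying profiles is a fixed $L^2$ function, noting $\phi_{\om,c}',\phi_{\om,c}''$ lie in $L^2$), together with $\int|R(\ve)|\lesssim\|\ve\|_{L^2}^2+\|\ve\|_{L^2}\|\ve_y\|_{L^2}\lesssim\alpha\|\ve\|_{L^2}$ from \eqref{eq:3.12} and \eqref{eq:3.15}, we get that the right-hand side of each identity is $O(\|\ve\|_{L^2})$ plus terms of size $O(\alpha)$ times the unknowns; absorbing the latter into the left side for $\alpha_1$ small yields \eqref{eq:3.16}. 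The only mildly delicate point — the main obstacle, such as it is — is the bookkeeping of integration by parts for the $i\ve_y$ and $i\Lambda\ve$ terms so that they land on the fixed functions as $(\ve,\chi_{\om,c}')_{L^2}$, $(\ve,\Lambda\chi_{\om,c})_{L^2}$, $(\ve,\phi_{\om,c}'')_{L^2}$, etc. (using skew-symmetry of $\del_y$ and of $\Lambda$), and checking that all the "leading" cross terms genuinely vanish, which is exactly what Lemma~\ref{lem:3.1} and $P(\phi_{\om,c})=0$ are arranged to guarantee.
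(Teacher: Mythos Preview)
Your approach is exactly the paper's: differentiate the three orthogonality relations \eqref{eq:3.14} in $s$, substitute \eqref{eq:3.11}, use self-adjointness of $L_{\om,c}$ and skew-symmetry of $\del_y$ and $\Lambda$ to push derivatives onto the fixed test functions, kill the leading cross terms via Lemma~\ref{lem:3.1} and the spectral orthogonalities, and then absorb the $O(\|\ve\|_{L^2})$ off-diagonal terms for $\alpha$ small to obtain \eqref{eq:3.16}. One small slip: in the first identity you say the term $\bigl(\frac{x_s}{\lam}-c\bigr)(\phi_{\om,c}',\chi_{\om,c})_{L^2}$ ``survives'', but in fact $(\phi_{\om,c}',\chi_{\om,c})_{L^2}=0$ since $\phi_{\om,c}'\in\ker L_{\om,c}$ is orthogonal to the negative eigenvector $\chi_{\om,c}$ (likewise $(i\phi_{\om,c},\chi_{\om,c})_{L^2}=0$); this is precisely why only the $\frac{\lambda_s}{\lam}(\Lambda\phi_{\om,c},\chi_{\om,c})_{L^2}$ term remains on the left, matching the stated identity.
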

\begin{proof}
By differentiating the orthogonal relation $(\eps(s),\chi)_{L^2}=0$ with respect to $s$, we have the first relation in the statement as follows:
\begin{align*}
  0={}&(\ve_s,\chi)_{L^2}
\\={}&\mathopen{}-(iL\ve,\chi)_{L^2}
  -(\theta_s-\om)(i\phi,\chi)_{L^2}
  +\l(\frac{x_s}{\lam}-c\r)(\phi',\chi)_{L^2}
  +\frac{\lambda_s}{\lam}(\Lambda\phi,\chi)_{L^2}
\\&-(\theta_s-\om)(i\ve,\chi)_{L^2}
  +\l(\frac{x_s}{\lam}-c\r)(\ve_y,\chi)_{L^2}
  +\frac{\lambda_s}{\lam}(\Lambda\ve,\chi)_{L^2}
  -(iR(\ve),\chi)_{L^2}
\\={}&\mathopen{}(\ve,Li\chi)_{L^2}
  +\frac{\lambda_s}{\lam}(\Lambda\phi,\chi)_{L^2}
\\&+(\theta_s-\om)(\ve,i\chi)_{L^2}
  -\l(\frac{x_s}{\lam}-c\r)(\ve,\chi')_{L^2}
  -\frac{\lambda_s}{\lam}(\ve,\Lambda\chi)_{L^2}
  +(R(\ve),i\chi)_{L^2},
\end{align*}
where we used $(i\phi,\chi)_{L^2}=(\phi',\chi)_{L^2}=0$ in the last equality. 

From Lemma~\ref{lem:3.1} we recall that the following equalities hold.
\begin{align*}
(\Lambda\phi ,i\phi)_{L^2}=(\Lambda\phi,\phi')_{L^2}=(i\phi',\phi)_{L^2}=0.
\end{align*}
By differentiating the relation $(\ve(s),i\phi)_{L^2}=0$ with respect to $s$, we obtain the second relation as
\begin{align*}
  0={}&(\ve_s,i\phi)_{L^2}
\\={}&\mathopen-(iL\ve,i\phi)_{L^2}
  -(\theta_s-\om)(i\phi,i\phi)_{L^2}
  +\l(\frac{x_s}{\lam}-c\r)(\phi',i\phi)_{L^2}
  +\frac{\lambda_s}{\lam}(\Lambda\phi,i\phi)_{L^2}
\\&-(\theta_s-\om)(i\ve,i\phi)_{L^2}
  +\l(\frac{x_s}{\lam}-c\r)(\ve_y,i\phi)_{L^2}
  +\frac{\lambda_s}{\lam}(\Lambda\ve,i\phi)_{L^2}
  -(iR(\ve),i\phi)_{L^2}
\\={}&\mathopen{}-(\ve,L\phi)_{L^2}
  -(\theta_s-\om)\|\phi\|_{L^2}^2
\\&-(\theta_s-\om)(\ve,\phi)_{L^2}
  -\l(\frac{x_s}{\lam}-c\r)(\ve,i\phi')_{L^2}
  -\frac{\lambda_s}{\lam}(\ve,i\Lambda\phi)_{L^2}
  -(R(\ve),\phi)_{L^2}.
\end{align*}
Similarly, by differentiating the relation $(\ve(s),\phi')_{L^2}=0$ with respect to $s$, we obtain the third relation as
\begin{align*}
  0={}&(\ve_s,\phi')_{L^2}
\\={}&-(iL\ve,\phi')_{L^2}
  -(\theta_s-\om)(i\phi,\phi')_{L^2}
  +\l(\frac{x_s}{\lam}-c\r)(\phi',\phi')_{L^2}
  +\frac{\lambda_s}{\lam}(\Lambda\phi,\phi')_{L^2}
\\&-(\theta_s-\om)(i\ve,\phi')_{L^2}
  +\l(\frac{x_s}{\lam}-c\r)(\ve_y,\phi')_{L^2}
  +\frac{\lambda_s}{\lam}(\Lambda\ve,\phi')_{L^2}
  -(iR(\ve),\phi')_{L^2}
\\={}&(\ve,Li\phi')_{L^2}
  +\l(\frac{x_s}{\lam}-c\r)\|\phi'\|_{L^2}^2
\\&+(\theta_s-\om)(\ve,i\phi')_{L^2}
  -\l(\frac{x_s}{\lam}-c\r)(\ve,\phi'')_{L^2}
  -\frac{\lambda_s}{\lam}(\ve,\Lambda\phi')_{L^2}
  +(R(\ve),i\phi')_{L^2}.
\end{align*}
From three relations above and \eqref{eq:3.12}, we obtain
\begin{align*}
  \l|\frac{\lam_s}{\lam}\r|+\l|\theta_s-\om\r|+\l|\frac{x_s}{\lam}-c\r|
  \lesssim \|\ve\|_{L^2}
  +\l( \l|\frac{\lam_s}{\lam}\r|
  +\l|\theta_s-\om\r|+\l|\frac{x_s}{\lam}-c\r|\r)\| \eps \|_{L^2}.
\end{align*}
By \eqref{eq:3.15} and taking $\alpha$ small enough, we obtain the estimate \eqref{eq:3.16}. 
\end{proof}

\subsection{Error estimates}\label{sec:3.3}
In this subsection, we derive the uniform estimate of $\eps(s)$ for $s\in I_{\alpha_0}$. Assume that $\eps_0\in H^1(\R)$ satisfies 
\begin{align}
\label{eq:3.17}
  (\eps_0,\chi_{\om,c})_{L^2}
  =(\eps_0, i\phi_{\om,c})_{L^2}
  =(\eps_0,\phi_{\om,c}')=0.
\end{align}
We set $u_0=\phi_{\om,c}+\eps_0$. From \eqref{eq:3.4} and \eqref{eq:3.17}, we have
\begin{align*}
  \lambda(0)
  =\lambda(u_0)=1,\quad
  \theta(0)=\theta(u_0)=0,\quad
  x(0)=x(u_0)=0,
\end{align*}
which implies that
\begin{align*}
  \eps(0)
  =\eps(\lambda(0),\theta(0),x(0); u(0))
  =\eps(1,0,0; u_0)
  =u_0-\phi_{\om,c}
  =\eps_0.
\end{align*}
We define
\begin{equation*}
\begin{aligned}
  E_e(\ve)
  &=E(\phi_{\om,c}+\ve)
  -E(\phi),
\\M_e(\ve)
  &=M(\phi_{\om,c}+\ve)
  -M(\phi_{\om,c})
  =2(\phi_{\om,c},\ve)_{L^2}
  +M(\ve),
\\P_e(\ve)
  &=P(\phi_{\om,c}+\ve)
  -P(\phi_{\om,c})
  =2(i\phi_{\om,c}',\ve)
  +P(\ve),
\\S_e(\ve)
  &=S_{\om,c}(\phi+\ve)-S_{\om,c}(\phi)
  =E_e(\ve)+\frac{\omega}{2}M_e(\ve)+\frac{c}{2}P_e(\ve).
\end{aligned}
\end{equation*}
\begin{lemma}\label{lem:3.5}
For $\ve\in H^1(\R)$, we have
\begin{align*}
  E_e(\ve)
  &=-\omega(\phi_{\om,c},\ve)_{L^2}
  -c(i\phi_{\om,c}',\ve)_{L^2}
  +O(\|\ve\|_{H^1}^2),
\\M_e(\ve)
  &=2(\phi_{\om,c},\ve)_{L^2}
  +O(\|\ve\|_{H^1}^2),
\\P_e(\ve)
  &=2(i\phi_{\om,c}',\ve)_{L^2}
  +O(\|\ve\|_{H^1}^2),
\\S_e(\ve)
  &=\frac12\langle L_{\om,c}\ve,\ve\rangle
  +O(\|\ve\|_{H^1}^3)
  =O(\|\ve\|_{H^1}^2).
\end{align*}
\end{lemma}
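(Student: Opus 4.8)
\textbf{Proof plan for Lemma~\ref{lem:3.5}.} The statement is a collection of Taylor expansions of the perturbed conserved quantities around the soliton $\phi_{\om,c}$. The plan is to expand each functional in powers of $\ve$, collect the linear and quadratic terms, and simplify using the stationary equation $S'_{\om,c}(\phi)=0$, i.e.\ $L_{\om,c}$ being the Hessian of $S_{\om,c}$ at $\phi$. First I would treat $M_e$ and $P_e$, which are essentially immediate since $M$ and $P$ are, respectively, quadratic and bilinear: indeed $M_e(\ve)=2(\phi,\ve)_{L^2}+\|\ve\|_{L^2}^2$ and $P_e(\ve)=2(i\phi',\ve)_{L^2}+P(\ve)$ exactly (these identities are already recorded in the definitions preceding the lemma), and $|P(\ve)|\le\|\ve_y\|_{L^2}\|\ve\|_{L^2}\lesssim\|\ve\|_{H^1}^2$, so the claimed remainders follow.

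Next I would handle $E_e$. Writing $E(\phi+\ve)-E(\phi)=(E'(\phi),\ve)_{L^2}+\tfrac12\langle E''(\phi)\ve,\ve\rangle+(\text{cubic and higher})$, the cubic-and-higher terms are $O(\|\ve\|_{H^1}^3)$ by the Sobolev embedding $H^1(\R)\hookrightarrow L^\infty(\R)$ applied to the $L^6$ and the derivative-nonlinearity contributions, hence in particular $O(\|\ve\|_{H^1}^2)$ once $\|\ve\|_{H^1}\le1$. For the linear term, since $S'_{\om,c}(\phi)=E'(\phi)+\tfrac{\om}{2}M'(\phi)+\tfrac{c}{2}P'(\phi)=0$ and $M'(\phi)=2\phi$, $P'(\phi)=2i\phi'$, we get $E'(\phi)=-\om\phi-ci\phi'$, so $(E'(\phi),\ve)_{L^2}=-\om(\phi,\ve)_{L^2}-c(i\phi',\ve)_{L^2}$; combined with $\tfrac12\langle E''(\phi)\ve,\ve\rangle=O(\|\ve\|_{H^1}^2)$ this gives the stated formula for $E_e$. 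Finally, for $S_e=E_e+\tfrac{\om}{2}M_e+\tfrac{c}{2}P_e$, I would add the three expansions: the linear terms cancel exactly by the same identity $E'(\phi)=-\om\phi-ci\phi'$, leaving $S_e(\ve)=\tfrac12\langle S''_{\om,c}(\phi)\ve,\ve\rangle+(\text{cubic and higher})=\tfrac12\langle L_{\om,c}\ve,\ve\rangle+O(\|\ve\|_{H^1}^3)$, which is $O(\|\ve\|_{H^1}^2)$ since $L_{\om,c}$ is bounded on $H^1$ (visible from the explicit formula \eqref{eq:1.10}).

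This lemma is essentially bookkeeping, so there is no real obstacle; the only point requiring a little care is the estimate of the cubic-and-higher remainder in $E$, where the term $(i|u|^2u_x,u)_{L^2}$ must be expanded and each piece of degree $\ge3$ in $\ve$ controlled by $\|\ve\|_{L^\infty}^{k}\|\ve\|_{L^2}\|\ve_y\|_{L^2}$-type bounds, using $H^1\hookrightarrow L^\infty$; the $L^6$ term is handled the same way. One should also note that the error bounds are uniform only on bounded sets of $H^1$, which is implicit in the $O(\cdot)$ notation and consistent with how the lemma is used later (with $\|\ve\|_{H^1}\le1$).
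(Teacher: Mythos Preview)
Your proposal is correct and follows essentially the same approach as the paper: Taylor-expand each functional about $\phi_{\om,c}$, use $S'_{\om,c}(\phi)=0$ to rewrite $E'(\phi)=-\omega\phi-ci\phi'$, and note that $M_e$, $P_e$ are exact quadratics so their estimates are immediate. The paper's proof is even terser than yours---it does not spell out the Sobolev-embedding control of the cubic remainder---but the argument is the same.
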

\begin{proof}
Since $S'(\phi)=0$, this is equivalent to
\[E'(\phi)
  =-\omega\phi-ci\phi'. \]
By the Taylor expansion we have
\begin{align*}
  E_e(\ve)
  &=E(\phi+\ve)-E(\phi)
  =\langle E'(\phi),\ve\rangle
  +O(\|\ve\|_{H^1}^2)
\\&=-\omega(\phi,\ve)_{L^2}
  -c(i\phi',\ve)_{L^2}
  +O(\|\ve\|_{H^1}^2),
\\
S_e(\ve)
  &=S(\phi+\ve)-S(\phi)
  =\frac12\langle L\ve,\ve\rangle+O(\|\ve\|_{H^1}^3).
\end{align*}
The estimates for $M_e$ and $P_e$ are trivial from the definition.
\end{proof}
\begin{lemma}\label{lem:3.6}
Let $b\ge 0$ and $c=2\kappa_0\sqrt{\omega}$. For $s\in I_{\alpha_0}$, we have
\begin{align*}
  &M_e(\ve(s))
  =M_e(\ve_0),\quad
  P_e(\ve(s))
  =\lambda(s)P_e(\ve_0),\quad
  E_e(\ve(s))
  =\lambda(s)^2E_e(\ve_0).
\end{align*}
\end{lemma}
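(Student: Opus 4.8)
The plan is to exploit the scaling structure of the decomposition \eqref{eq:3.8}--\eqref{eq:3.9} together with the conservation laws. Recall that $u(t)$ solves \eqref{DNLSb} and that, by construction,
\[
  v(s,y) = \lambda(s)^{1/2}e^{-i\theta(s)}u(t,\lambda(s)y+x(s)), \qquad \eps(s) = v(s)-\phi_{\om,c},
\]
where $s=\tilde s(t)$. The key observation is that $E$, $M$, and $P$ have definite homogeneity under the rescaling $f\mapsto f^{\lambda}$ and are invariant under the phase rotation $f\mapsto e^{-i\theta}f$ and the spatial translation $f\mapsto f(\cdot+x)$: indeed from the scaling identity already used in the proof of Lemma~\ref{lem:3.1}~(iii),
\[
  M(v) = M(u(t)), \qquad P(v) = \lambda(s)P(u(t)), \qquad E(v) = \lambda(s)^2 E(u(t)).
\]
(The phase and translation invariance of $E$, $M$, $P$ is standard and was already invoked when writing $S'(e^{i\theta}\phi(\cdot-y))=0$.) Since $E(u(t))=E(u_0)$, $M(u(t))=M(u_0)$, $P(u(t))=P(u_0)$ are conserved by the flow, and since $u(0)=u_0=\phi_{\om,c}+\eps_0$ with $\lambda(0)=1$, we get $M(v(s))=M(\phi_{\om,c}+\eps_0)$, $P(v(s))=\lambda(s)P(\phi_{\om,c}+\eps_0)$, $E(v(s))=\lambda(s)^2 E(\phi_{\om,c}+\eps_0)$.

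From here the proof is essentially bookkeeping. First I would subtract the corresponding quantities at $\phi_{\om,c}$. For the mass: $M_e(\ve(s)) = M(v(s)) - M(\phi_{\om,c}) = M(\phi_{\om,c}+\eps_0) - M(\phi_{\om,c}) = M_e(\eps_0)$, using $v(s)=\phi_{\om,c}+\eps(s)$ and $\eps(0)=\eps_0$. For the momentum: $P_e(\ve(s)) = P(v(s)) - P(\phi_{\om,c}) = \lambda(s)P(\phi_{\om,c}+\eps_0) - P(\phi_{\om,c})$; here I must use that $P(\phi_{\om,c})=0$ (this is precisely the degenerate condition $c=2\kappa_0\sqrt\omega$ when $b\ge 0$), so $\lambda(s)P(\phi_{\om,c}+\eps_0) = \lambda(s)\bigl(P(\phi_{\om,c}+\eps_0)-P(\phi_{\om,c})\bigr) = \lambda(s)P_e(\eps_0)$, hence $P_e(\ve(s))=\lambda(s)P_e(\eps_0)$. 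For the energy the same manipulation works because $E(\phi_{\om,c})=0$ at the degenerate soliton: $E_e(\ve(s)) = \lambda(s)^2 E(\phi_{\om,c}+\eps_0) = \lambda(s)^2\bigl(E(\phi_{\om,c}+\eps_0)-E(\phi_{\om,c})\bigr) = \lambda(s)^2 E_e(\eps_0)$.

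The only genuine subtlety, and the step I would flag as the main point to get right, is the scaling identities for $P$ and $E$ under $f\mapsto f^{\lambda}$ combined with phase/translation: one needs $P(\lambda^{1/2}e^{-i\theta}u(\lambda\cdot+x)) = \lambda P(u)$ and similarly for $E$ with factor $\lambda^2$, and $M$ with factor $1$. These follow by the change of variables $y\mapsto \lambda y + x$ in the defining integrals, noting that each derivative $\partial_y$ produces a factor $\lambda$ and that the $\lambda^{1/2}$ prefactor contributes to the $L^2$-type normalizations exactly so that $M$ is scale-invariant; the phase factor $e^{-i\theta}$ drops out of all three since they are built from $|u|^2$, $u\bar u_x$, and $|u|^6$, which are phase-invariant. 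Once these homogeneities are in hand, and once one uses $E(\phi_{\om,c})=P(\phi_{\om,c})=0$ together with the conservation of $E,M,P$ along the flow, the three claimed identities are immediate. This is precisely why the argument requires the degenerate soliton: for a non-degenerate soliton $P(\phi_{\om,c})\ne 0$ and the clean proportionality $P_e(\ve(s))=\lambda(s)P_e(\eps_0)$ would fail.
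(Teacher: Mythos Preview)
Your proposal is correct and follows essentially the same approach as the paper: both use the phase/translation invariance and scaling homogeneity $M(v)=M(u)$, $P(v)=\lambda P(u)$, $E(v)=\lambda^2 E(u)$ together with conservation of $E,M,P$, and then invoke $E(\phi_{\omega,c})=P(\phi_{\omega,c})=0$ at the degenerate soliton to convert these into the claimed identities for $E_e,M_e,P_e$. Your write-up is slightly more explicit about verifying the scaling identities, but the argument is the same.
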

\begin{proof}
A direct computation shows that
\begin{align*}
  M(\phi+\ve(s))
  &=M(v(s))
  =M(u(s))
  =M(u_0)
  =M(\phi+\ve_0).
\end{align*}
By expanding both sides we deduce that 
\[
  2(\phi,\ve(s))_{L^2}
  +M(\ve(s))
  =2(\phi,\ve_0)_{L^2}
  +M(\ve_0), \]
which is the desired equality. 

Since $E(\phi)=P(\phi)=0$ from the assumption, we have
\begin{align*}
  E_e(\ve(s))=E(\phi_{\om,c}+\ve(s))=E(v(s)),\quad
  P_e(\ve(s))=P(\phi_{\om,c}+\ve(s))=P(v(s)).
\end{align*}
Therefore, we deduce that
\begin{align*}
  P_e(\ve(s))
  &=P(v(s))
  =\lambda(s)P(u(t(s)))
  =\lambda(s)P(u_0)
  =\lambda(s)P_e(\ve_0),
\\E_e(\ve(s))
  &=E(v(s))
  =\lambda(s)^2E(u(t(s)))
  =\lambda(s)^2E(u_0)
  =\lambda(s)^2E_e(\ve_0).
\end{align*}
This completes the proof.
\end{proof}
\begin{lemma}
\label{lem:3.7}
Let $b>0$ and $c=2\kappa_0\sqrt{\omega}$. Then there exist $C>0$ and $\alpha_2\in(0,\alpha_0)$ such that for any $\alpha\in(0,\alpha_2)$ and $s\in I_{\alpha}$, we have
\begin{align}
\label{eq:3.18}
  \|\ve(s)\|_{H^1}^2
  \le{}C\bigl(&\alpha|2\omega(\phi_{\om,c},\ve_0)_{L^2}+c(i\phi_{\om,c}',\ve_0)_{L^2}|
\\&+\alpha^2|\omega(\phi_{\om,c},\ve_0)_{L^2}+c(i\phi_{\om,c}',\ve_0)_{L^2}|
  +\|\ve_0\|_{H^1}^2\bigr).
 \notag 
\end{align}
\end{lemma}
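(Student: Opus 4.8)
\textbf{Proof proposal for Lemma~\ref{lem:3.7}.}

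The plan is to combine the coercivity of $L_{\om,c}$ on the positive subspace $\mathcal{P}_{\om,c}$ (Proposition~\ref{prop:1.2}(i), available since $c=2\kappa_0\sqrt\omega<2\sqrt\omega$ when $b>0$) with the conservation-type identities of Lemma~\ref{lem:3.6} and the Taylor expansion of the action from Lemma~\ref{lem:3.5}. First I would expand $S_e(\eps(s))$ two ways. On the one hand, by definition $S_e(\eps(s))=E_e(\eps(s))+\tfrac{\omega}{2}M_e(\eps(s))+\tfrac{c}{2}P_e(\eps(s))$, so Lemma~\ref{lem:3.6} gives
\[
  S_e(\eps(s))
  =\lambda(s)^2E_e(\eps_0)+\tfrac{\omega}{2}M_e(\eps_0)+\tfrac{c}{2}\lambda(s)P_e(\eps_0).
\]
Using $\lambda(s)=1+(\lambda(s)-1)$ and $|\lambda(s)-1|\le C\alpha$ from \eqref{eq:3.15}, together with the first-order expansions $E_e(\eps_0)=-\omega(\phi,\eps_0)_{L^2}-c(i\phi',\eps_0)_{L^2}+O(\|\eps_0\|_{H^1}^2)$, $M_e(\eps_0)=2(\phi,\eps_0)_{L^2}+O(\|\eps_0\|_{H^1}^2)$, $P_e(\eps_0)=2(i\phi',\eps_0)_{L^2}+O(\|\eps_0\|_{H^1}^2)$ from Lemma~\ref{lem:3.5}, the $\lambda$-independent parts $-\omega(\phi,\eps_0)-c(i\phi',\eps_0)+\omega(\phi,\eps_0)+c(i\phi',\eps_0)$ cancel exactly, leaving only the $O(\alpha)$- and $O(\alpha^2)$-weighted linear terms $\omega(\phi,\eps_0)_{L^2}+c(i\phi',\eps_0)_{L^2}$ plus an $O(\|\eps_0\|_{H^1}^2)$ remainder. (One must be careful that the quadratic remainders in Lemma~\ref{lem:3.5} are only $O(\|\eps_0\|_{H^1}^2)$, not small relative to the linear terms, but that is harmless: they are absorbed into the $\|\eps_0\|_{H^1}^2$ term on the right of \eqref{eq:3.18}.) Thus the right-hand side of \eqref{eq:3.18} is, up to constants, an upper bound for $|S_e(\eps(s))|$.

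On the other hand, by Lemma~\ref{lem:3.5} applied to $\eps(s)$ we have $S_e(\eps(s))=\tfrac12\langle L_{\om,c}\eps(s),\eps(s)\rangle+O(\|\eps(s)\|_{H^1}^3)$. The point is now to bound $\|\eps(s)\|_{H^1}^2$ from above by $\langle L_{\om,c}\eps(s),\eps(s)\rangle$ plus controllable terms. Decompose $\eps(s)=a\,i\phi+b\,\phi'+p$ according to Proposition~\ref{prop:1.2}, with $p\in\mathcal{P}_{\om,c}$; the negative-direction component vanishes because of the orthogonality $(\eps(s),\chi_{\om,c})_{L^2}=0$ from \eqref{eq:3.14}. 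Wait---that orthogonality is with respect to $L^2$ against $\chi_{\om,c}$, while the decomposition in Proposition~\ref{prop:1.2} is the $H^1$-orthogonal (spectral) one; so I would instead argue as in the standard coercivity lemmas: since $L_{\om,c}(i\phi)=L_{\om,c}\phi'=0$, we get $\langle L_{\om,c}\eps(s),\eps(s)\rangle=\langle L_{\om,c}p,p\rangle\ge k\|p\|_{H^1}^2$ by \eqref{eq:1.11}. It then remains to control the kernel components $a,b$. The component along $i\phi$ is handled by the mass relation: $M_e(\eps(s))=M_e(\eps_0)$ together with $M_e(\eps(s))=2(\phi,\eps(s))_{L^2}+\|\eps(s)\|_{L^2}^2$ and the orthogonality $(\eps(s),i\phi)_{L^2}=0$ — actually this last orthogonality kills a different pairing, so more precisely I would use that $(\phi,i\phi)_{L^2}=0$ and that $2(\phi,\eps(s))_{L^2}=M_e(\eps_0)-\|\eps(s)\|_{L^2}^2$ bounds $|(\phi,\eps(s))_{L^2}|$ by $C(\|\eps_0\|_{H^1}^2+\|\eps(s)\|_{H^1}\cdot(\text{linear data}))$; pairing the decomposition of $\eps(s)$ against $\phi$ and against $i\phi'$ then yields, via Lemma~\ref{lem:3.1}, bounds on $|a|$ and $|b|$ of the form $|a|+|b|\le C(\|p\|_{H^1}+|(\phi,\eps_0)_{L^2}|+|(i\phi',\eps_0)_{L^2}|+\|\eps_0\|_{H^1}^2)$.

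Putting the two sides together: $\|\eps(s)\|_{H^1}^2\le C(\|p\|_{H^1}^2+a^2+b^2)\le C(\langle L_{\om,c}\eps(s),\eps(s)\rangle+|(\phi,\eps_0)_{L^2}|^2+|(i\phi',\eps_0)_{L^2}|^2+\|\eps_0\|_{H^1}^4)$, while $\langle L_{\om,c}\eps(s),\eps(s)\rangle=2S_e(\eps(s))+O(\|\eps(s)\|_{H^1}^3)\le C(\text{RHS of }\eqref{eq:3.18})+C\alpha\|\eps(s)\|_{H^1}^2$, using $\|\eps(s)\|_{H^1}\le C\alpha$ from \eqref{eq:3.15} to absorb the cubic error. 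Choosing $\alpha_2$ small enough that $C\alpha\le\tfrac12$, the $\|\eps(s)\|_{H^1}^2$ on the right is absorbed into the left and \eqref{eq:3.18} follows (the squared data terms $|(\phi,\eps_0)_{L^2}|^2$ etc.\ are dominated by $\alpha|(\phi,\eps_0)_{L^2}|+\|\eps_0\|_{H^1}^2$ for $\|\eps_0\|_{H^1}$ small, since $|(\phi,\eps_0)_{L^2}|\le C\|\eps_0\|_{H^1}$). The main obstacle I anticipate is the bookkeeping around the two different orthogonal decompositions — the explicit $L^2$-orthogonality conditions \eqref{eq:3.14} versus the spectral $H^1$-decomposition of Proposition~\ref{prop:1.2} — and making sure the kernel components $a,b$ of $\eps(s)$ are genuinely controlled by the initial data $(\phi,\eps_0)_{L^2}$, $(i\phi',\eps_0)_{L^2}$ and not left floating; this is exactly where the coercivity \eqref{eq:1.11}, valid only for $c<2\sqrt\omega$, is indispensable and why the argument breaks for the algebraic soliton $c=2\sqrt\omega$.
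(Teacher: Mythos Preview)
Your two-sided expansion of $S_e(\ve(s))$ is exactly the paper's argument, and the upper bound via Lemmas~\ref{lem:3.5} and~\ref{lem:3.6} is carried out the same way there. The only difference is the lower bound, where you take an unnecessary detour.

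Your worry about ``two different orthogonal decompositions'' is a misconception: the decomposition in Proposition~\ref{prop:1.2} is the \emph{spectral} decomposition of the self-adjoint operator $L_{\om,c}$ on $L^2(\R)$, and spectral subspaces of a self-adjoint operator are $L^2$-orthogonal, not $H^1$-orthogonal. Hence the three $L^2$-orthogonality conditions \eqref{eq:3.14} already say precisely that $\ve(s)\in\mathcal{P}_{\om,c}$; in your notation the coefficients $a$, $b$, and the $\chi$-component are all identically zero, and there is nothing to control. The paper therefore writes, in one line,
\[
  S_e(\ve(s))=\tfrac12\langle L_{\om,c}\ve(s),\ve(s)\rangle+O(\|\ve(s)\|_{H^1}^3)\gtrsim\|\ve(s)\|_{H^1}^2
\]
directly from \eqref{eq:1.11} and \eqref{eq:3.15} for $\alpha$ small, and combines this with the upper bound you already have. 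Your argument would still go through (the bounds on $a,b$ you sketch are trivially satisfied since $a=b=0$), but the bookkeeping you flagged as the ``main obstacle'' simply does not arise.
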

\begin{proof}
Since $\omega>c^2/4$ from the assumption, we note that the coercivity property \eqref{eq:1.9} holds. 
It follows from Lemma~\ref{lem:3.5} and \eqref{eq:3.15} that by taking $\alpha$ small enough,
\[S_e(\ve(s))
  =\frac12\langle L\ve(s),\ve(s)\rangle
  +O(\|\ve(s)\|_{H^1}^3)
  \gtrsim \|\ve(s)\|_{H^1}^2. \]
On the other hand, we deduce from Lemmas~\ref{lem:3.5} and \ref{lem:3.6} that
\begin{align*}
  S_e(\ve(s))
  ={}&\lambda(s)^2E_e(\ve_0)
  +\frac{\omega}{2}M_e(\ve_0)
  +\lambda(s)\frac{c}{2}P_e(\ve_0)
\\={}&S_e(\ve_0)
  +(\lambda(s)^2-1)E_e(\ve_0)
  +(\lambda(s)-1)\frac{c}{2}P_e(\ve_0)
\\={}&(\lambda(s)-1)\Bigl(2E_e(\ve_0)
  +\frac{c}{2}P_e(\ve_0)\Bigr)
  +(\lambda(s)-1)^2E_e(\ve_0)+O(\| \eps_0\|_{H^1}^2)
\\={}&(\lambda(s)-1)\bigl(-2\omega(\phi,\ve_0)_{L^2}-c(i\phi',\ve_0)_{L^2}\bigr)
\\&-(\lambda(s)-1)^2\bigl(\omega(\phi,\ve_0)_{L^2}+c(i\phi',\ve_0)_{L^2}\bigr)
  +O(\|\ve_0\|_{H^1}^2).
\end{align*}
Therefore, combined with \eqref{eq:3.15}, we obtain \eqref{eq:3.18}. 
\end{proof}

\section{Virial identities}
\label{sec:4}

In this section we organize virial identities of \eqref{DNLSb}.
Let $u$ be the $H^1$-solution of \eqref{DNLSb} with $u(0)=u_0\in H^1(\R)$, which is defined on a maximal interval $(-T_{\rm min},T_{\rm max})$. 
\begin{proposition}[Virial identity] \label{prop:4.1}
For $u_0\in H^1(\R)$ such that $\int x^2|u_0|^2<\infty$, we have the following relations:
\begin{align}
\label{eq:4.1}
\frac{d}{dt}\int x^2|u|^2&=4\Im\int xu_x\ol{u} +\int x|u|^4,
\\
\label{eq:4.2}
\frac{d}{dt}\Im\int xu_x\ol{u} &=4E(u_0)
\end{align}
for $t\in (-T_{\rm min},T_{\rm max})$.
\end{proposition}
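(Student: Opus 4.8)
The plan is to establish the two virial identities by direct differentiation under the integral sign, using the equation \eqref{DNLSb} to substitute for $u_t$, followed by repeated integration by parts. Since $u$ is an $H^1$-solution, I would first argue that the formal computations below can be justified by a standard regularization/approximation argument (approximate $u_0$ by smooth data, for which all integrals converge and manipulations are legitimate, then pass to the limit), together with the hypothesis $\int x^2|u_0|^2<\infty$ which propagates on the maximal existence interval; this is routine and I would only remark on it. The substance is in the algebraic identities.

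For \eqref{eq:4.1}, I would differentiate $\int x^2|u|^2 = \int x^2 u\ol{u}$, obtaining $2\Re\int x^2 u_t\ol{u}$, and then replace $u_t = i(u_{xx} + i|u|^2 u_x + b|u|^4 u)$ from \eqref{DNLSb}. The term $2\Re\int x^2\, i u_{xx}\ol{u}$ is handled by integrating by parts: $\Re\int x^2 i u_{xx}\ol u = -\Re\int i(2x u_x\ol u + x^2 u_x\ol{u}_x) = -2\Re\int i x u_x \ol u = 2\Im\int x u_x\ol u$ (the $x^2|u_x|^2$ term is purely imaginary under $\Re i(\cdot)$ and drops). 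The term $2\Re\int x^2 i\cdot i|u|^2 u_x\ol u = -2\Re\int x^2 |u|^2 u_x\ol u = -\int x^2 |u|^2 (|u|^2)_x = \int (x^2|u|^2)' |u|^2 = 2\int x|u|^4$ after integration by parts — wait, I need to be careful: $-\int x^2|u|^2(|u|^2)_x = -\frac12\int x^2 ((|u|^2)^2)_x = \int x|u|^4$, which gives the stated $\int x|u|^4$. The term $2\Re\int x^2 i b|u|^4 u\ol u = 2b\Re\int x^2 i |u|^6 = 0$. Summing gives \eqref{eq:4.1}.

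For \eqref{eq:4.2}, I would differentiate $\Im\int x u_x\ol u$. Writing $\Im\int xu_x\ol u = \frac12\int x\,\partial_x(|u|^2)\cdot(\text{imaginary part bookkeeping})$ is not quite the cleanest; instead I would directly compute $\frac{d}{dt}\Im\int xu_x\ol u = \Im\int x(u_{tx}\ol u + u_x\ol{u}_t)$, integrate by parts in the first term to symmetrize, substitute $u_t$ from \eqref{DNLSb}, and collect terms. The key is that after all integrations by parts the result should organize into exactly $4E(u_0) = 2\|u_x\|_{L^2}^2 - (i|u|^2u_x,u)_{L^2} - \tfrac{2b}{3}\|u\|_{L^6}^6$, and then invoke conservation of energy to replace $E(u(t))$ by $E(u_0)$. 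The main obstacle — and the only place requiring genuine care rather than bookkeeping — is tracking the contribution of the nonlinear terms $-i|u|^2u_x$ and $-b|u|^4u$ through the integration by parts: one must check that the cubic-derivative term produces precisely the $-\frac14(i|u|^2u_x,u)_{L^2}$-type contribution (with the correct sign and coefficient) and that the quintic term produces the $-\frac{b}{6}\|u\|_{L^6}^6$ contribution, so that the right-hand side assembles into $4E$ and not some other combination. I would handle this by carefully using $\Re(iz\bar z)=0$ to kill spurious terms, the identity $\Re(u_x\ol u)=\frac12\partial_x|u|^2$, and integration by parts to move the weight $x$ off the highest derivative; the boundary terms vanish by the decay guaranteed by $\int x^2|u_0|^2<\infty$ and $u\in H^1$.
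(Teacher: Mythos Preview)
Your approach is correct and is precisely the standard direct computation by which these identities are established; the paper itself does not give a proof but simply cites \cite[Lemma~2.2]{Wu13} and \cite[Proposition~6.5.1]{C03}, where exactly this kind of argument is carried out. Your derivation of \eqref{eq:4.1} is complete and accurate, and your sketch for \eqref{eq:4.2} correctly identifies both the method and the one genuine point of care (tracking the cubic-derivative and quintic contributions so that the right-hand side assembles into $4E(u)$, then invoking energy conservation).
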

\begin{proof}
See \cite[Lemma 2.2]{Wu13} and \cite[Proposition 6.5.1]{C03}.
\end{proof}
The first relation \eqref{eq:4.1} is different from the one of (NLS) due to the appearance of the second term in the right-hand side. On the other hand, the second relation \eqref{eq:4.2} is the same as (NLS). We take advantage of the latter relation for the proof of instability.

We now assume that $u(0)=u_0\in U_{\alpha_0}$. We recall that $v(t)$ and $\eps(t)$ are defined in \eqref{eq:3.8} and \eqref{eq:3.9}, respectively. We rescale the time variable $t$ to $s$ as in Section~\ref{sec:3}. Following \cite{MR05}, we rewrite the virial relation in terms of $\ve(s)$. We denote
\[J[v]=\Im\int y v_y\ol{v}\,dy
  =-\Re\int iy v_y\ol{v}\,dy. \]
Then $J[\eps]$ is represented as follows.
\begin{lemma}
Let $b\ge0$ and $c=2\kappa_0\sqrt{\omega}$.
Assume that $\int x^2|u_0|^2<\infty$. For $s\in I_{\alpha_0}$, we have
\begin{align} \label{eq:4.3}
  J[\eps(s)]
  =2(\ve(s),i\Lambda\phi_{\omega,c})_{L^2}
  +J[u(s)]+x(s)P(u_0).
\end{align}
\end{lemma}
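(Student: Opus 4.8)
The plan is to compute $J[\eps(s)]$ by unwinding the definition of $\eps(s)$ in terms of $v(s)$ and then $v(s)$ in terms of the rescaled solution, tracking how each scaling/phase/translation symmetry acts on the quadratic functional $J$. Recall $v(s) = \phi_{\omega,c} + \eps(s)$, so by bilinearity of $J$ (more precisely, the associated bilinear form $B[v,w] = \Im\int y v_y\ol{w}$, which is not symmetric but for which $J[v+w] = J[v] + J[w] + B[v,w] + B[w,v]$) we expand
\begin{align*}
  J[v(s)] = J[\phi_{\omega,c}] + J[\eps(s)] + B[\phi_{\omega,c},\eps(s)] + B[\eps(s),\phi_{\omega,c}].
\end{align*}
The first step is to check $J[\phi_{\omega,c}]=0$: this follows because $\Im\int y\phi'\ol{\phi} = \tfrac12\int y (|\phi|^2)'\cdot(\text{something})$—more carefully, using $\phi = e^{i\eta}\Phi$ with $\Phi$ real even and $\eta' = c/2 - \tfrac14\Phi^2$, one gets $\Im(y\phi'\ol{\phi}) = y\eta'\Phi^2 = y(\tfrac{c}{2}\Phi^2 - \tfrac14\Phi^4)$, an odd function, so its integral vanishes. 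The second step is to identify the cross terms: I expect $B[\phi_{\omega,c},\eps] + B[\eps,\phi_{\omega,c}] = -2(\eps, i\Lambda\phi_{\omega,c})_{L^2}$ up to sign, using integration by parts to move the $y\partial_y$ onto $\phi_{\omega,c}$ and recognizing $\tfrac{\phi}{2} + y\phi_y = \Lambda\phi$ together with the skew-symmetry of $\Lambda$; the precise constant and sign will need care because $J$ involves $\Im$ and the real $L^2$ inner product pairs with a factor $i$.

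The third step is to relate $J[v(s)]$ back to the original solution $u$. Since $v(t,y) = \lambda^{1/2}e^{-i\theta}u(t,\lambda y + x)$, a change of variables $x' = \lambda y + x$ in the integral defining $J[v]$ produces $J[v(t)] = \Im\int (\tfrac{x'-x}{\lambda})\partial_{x'}u\,\ol{u}\,dx' = J[u(t)] - \tfrac{x}{\lambda}\cdot\lambda\cdot(\text{gradient term})$; the phase $e^{-i\theta}$ drops out since it cancels with its conjugate, and the $\lambda$-factors from $v_y = \lambda^{3/2}e^{-i\theta}u_x$ and the Jacobian combine cleanly. The term $\Im\int u_x\ol{u}\,dx$ that appears is, up to a constant, the momentum $P(u)$: indeed $P(u) = (iu_x,u)_{L^2} = \Re\int iu_x\ol{u} = -\Im\int u_x\ol{u}$. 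By conservation of momentum, $P(u(s)) = P(u_0)$, which is where the $x(s)P(u_0)$ term in \eqref{eq:4.3} comes from. Assembling the three steps gives \eqref{eq:4.3}.

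The main obstacle I anticipate is bookkeeping of signs and constants: $J$ is defined with a leading $\Im$, the paper's inner product is $\Re\int v\ol{w}$, and converting between $\Im\int\cdots$ and $(\cdot,i\,\cdot)_{L^2}$ introduces sign flips that must be tracked consistently through the integration by parts and the change of variables. A secondary technical point is justifying the integration-by-parts steps and the finiteness of the integrals: one needs $\int x^2|u_0|^2 < \infty$ to be propagated (via the first virial identity \eqref{eq:4.1} and local theory) so that $\int y^2|v|^2$ and hence $J[v]$, $J[\eps]$ are well-defined along the flow—this is exactly why the hypothesis $\int x^2|u_0|^2<\infty$ is imposed in the lemma. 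Once the finiteness is granted, the computation is a routine change of variables plus one integration by parts, and the identity falls out.
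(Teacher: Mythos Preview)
Your proposal is correct and follows essentially the same route as the paper: expand $J[v(s)]=J[\phi+\eps(s)]$ bilinearly, show $J[\phi_{\omega,c}]=0$, identify the cross terms as $-2(\eps,i\Lambda\phi_{\omega,c})_{L^2}$, and compute $J[v(s)]=J[u(s)]+x(s)P(u_0)$ via phase/scaling invariance of $J$ and a translation change of variables together with conservation of momentum. The only cosmetic difference is that the paper obtains $J[\phi_{\omega,c}]=0$ by rewriting $J[\phi]=(i\phi,\Lambda\phi)_{L^2}$ and invoking Lemma~\ref{lem:3.1}(i), whereas you redo that computation directly with $\phi=e^{i\eta}\Phi$; both amount to the same oddness argument.
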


\begin{proof}
From the phase and scaling invariance of $J$, we have
\begin{align}\label{eq:4.4}
  J[v(s)]
  &=J[u(s,\cdot+x(s)]
 =J[u(s)]+x(s)P(u_0).
\end{align}
On the other hand, $J[v(s)]$ is rewritten as
\begin{align} 
\label{eq:4.5}
  J[v(s)]
  =J[\ve(s)+\phi]
  =J[\ve(s)]-2(\ve,i\Lambda\phi)_{L^2}+J[\phi].
\end{align}
By Lemma~\ref{lem:3.1}, $J[\phi]$ is rewritten as
\begin{align} 
\label{eq:4.6}
  J[\phi]
  =(i\phi,y\phi')_{L^2}
  =(i\phi,\tfrac{1}{2}\phi+y\phi')_{L^2}
  =(i\phi,\Lambda\phi)_{L^2}
  =0.
\end{align}
By combining \eqref{eq:4.4}, \eqref{eq:4.5}, and \eqref{eq:4.6}, we obtain \eqref{eq:4.3}.
\end{proof}
The first term in the right-hand side of \eqref{eq:4.3} 
\begin{align}
\label{eq:4.7}
  (\ve(s),i\Lambda\phi_{\omega ,c})_{L^2}
=\Im\int \eps(s)\Lambda\phi_{\omega,c}
\end{align}
plays an essential role in our proof of instability. We note that \eqref{eq:4.7} is well-defined without the assumption $\int x^2|u_0|^2<\infty$. 
From the equation \eqref{eq:3.11}, we have
\begin{align*}
  \notag
  \frac{d}{ds}(\ve(s),i\Lambda\phi)_{L^2}
  &=-(i\ve_s(s),\Lambda\phi)_{L^2}
\\ \notag
&=-\biggl(
  \begin{aligned}[t]
  &L\ve
  +(\theta_s-\om)\phi +i\l(\frac{x_s}{\lam}-c\r)\phi'+i\frac{\lambda_s}{\lam}\Lambda\phi 
\\&+(\theta_s-\om)\ve+i\l(\frac{x_s}{\lam}-c\r)\ve_y+i\frac{\lambda_s}{\lam}\Lambda\ve
  +R(\ve),\Lambda\phi\biggr)_{L^2}
  \end{aligned}
\end{align*}
for $s\in I_{\alpha_0}$. We note that $(\phi,\Lambda\phi)_{L^2}=(i\Lambda\phi,\Lambda\phi)_{L^2}=0$ and $(i\phi', \Lambda\phi)_{L^2}=0$ by Lemma~\ref{lem:3.1}~(2). Therefore, by \eqref{eq:3.12} and \eqref{eq:3.16}, we deduce that
\begin{align}
\label{eq:4.8}
  \frac{d}{ds}(\ve(s),i\Lambda\phi)_{L^2}
  &=-(\ve(s),L\Lambda\phi)_{L^2}
  +O(\|\ve(s)\|_{H^1}^2)
\end{align}
for $s\in I_{\alpha_1}$, where $\alpha_1>0$ appeared in Lemma \ref{lem:3.4}. Therefore, by using the relation
$L\Lambda\phi=-2\omega\phi -ci\phi'$,
we obtain the following claim.
\begin{lemma}
\label{lem:4.3}
Let $b\ge0$ and $c=2\kappa_0\sqrt{\omega}$. There exists $C>0$ such that for $s\in I_{\alpha_1}$,
\begin{align}
\label{eq:4.9}
  \left|\frac{d}{ds}(\ve(s),i\Lambda\phi_{\om,c})_{L^2}
  -(\ve(s),2\omega\phi_{\om,c}+ci\phi_{\om,c}')_{L^2}\right|
  \le C\|\ve(s)\|_{H^1}^2.
\end{align}
\end{lemma}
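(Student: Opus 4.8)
The plan is to differentiate the pairing $(\varepsilon(s), i\Lambda\phi)_{L^2}$ in $s$, substitute the equation \eqref{eq:3.11} for $i\varepsilon_s$, and identify which terms survive. Concretely, write
\[
  \frac{d}{ds}(\varepsilon(s), i\Lambda\phi)_{L^2}
  = (\varepsilon_s, i\Lambda\phi)_{L^2}
  = -(i\varepsilon_s, \Lambda\phi)_{L^2},
\]
using that $\Lambda\phi$ is $s$-independent and that multiplication by $i$ is skew-adjoint on the real Hilbert space $L^2$. Then I would plug in \eqref{eq:3.11}. The terms to dispose of split into three groups: (a) the ``linear in the modulation parameters, zeroth order in $\varepsilon$'' terms $(\theta_s-\omega)\phi$, $(\tfrac{x_s}{\lambda}-c)i\phi'$, $\tfrac{\lambda_s}{\lambda}i\Lambda\phi$; (b) the ``linear in the modulation parameters, linear in $\varepsilon$'' terms $(\theta_s-\omega)\varepsilon$, $(\tfrac{x_s}{\lambda}-c)i\varepsilon_y$, $\tfrac{\lambda_s}{\lambda}i\Lambda\varepsilon$; and (c) the remainder $R(\varepsilon)$. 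What is left is $-(L\varepsilon, \Lambda\phi)_{L^2} = -(\varepsilon, L\Lambda\phi)_{L^2}$ by self-adjointness of $L$, and then the stated identity $L\Lambda\phi = -2\omega\phi - ci\phi'$ finishes it.

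For group (a), I would invoke Lemma~\ref{lem:3.1}: part~(i) gives $(\Lambda\phi, i\phi)_{L^2} = 0$ and hence $(\phi, \Lambda\phi)_{L^2}$ appears paired against $(\theta_s-\omega)$ — wait, more carefully: $-(i(\theta_s-\omega)\phi, \Lambda\phi)_{L^2} = -(\theta_s-\omega)(i\phi,\Lambda\phi)_{L^2} = 0$; similarly $-(i(\tfrac{x_s}{\lambda}-c)i\phi', \Lambda\phi)_{L^2} = (\tfrac{x_s}{\lambda}-c)(\phi',\Lambda\phi)_{L^2} = 0$ by Lemma~\ref{lem:3.1}(i); and $-(i\tfrac{\lambda_s}{\lambda}i\Lambda\phi,\Lambda\phi)_{L^2} = \tfrac{\lambda_s}{\lambda}\|\Lambda\phi\|_{L^2}^2$ — so this one is NOT zero from part (i). Here I need part~(2) of Lemma~\ref{lem:3.1}, which under the hypothesis $c = 2\kappa_0\sqrt{\omega}$ gives $(i\phi',\Lambda\phi)_{L^2} = 0$; but I actually need $(i\Lambda\phi,\Lambda\phi)_{L^2} = 0$, which holds trivially since $i$ is skew-adjoint so $(i\Lambda\phi,\Lambda\phi)_{L^2} = -(i\Lambda\phi,\Lambda\phi)_{L^2}$. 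Good — that term vanishes for free. So group (a) contributes nothing, using only Lemma~\ref{lem:3.1}(i) and the skew-adjointness of $i$; the degeneracy hypothesis is not even needed for this group, though it is used elsewhere (e.g.\ for $(i\phi',\Lambda\phi)_{L^2}=0$ in the paper's phrasing).

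For group (b) and group (c), the point is purely quantitative: each of these three terms in (b) carries a modulation-parameter factor which by \eqref{eq:3.16} is $O(\|\varepsilon\|_{L^2})$, times a pairing with $\Lambda\phi$ which is $O(\|\varepsilon\|_{H^1})$ by Cauchy--Schwarz (note $\|\Lambda\varepsilon\|_{L^2}\lesssim\|\varepsilon\|_{H^1}$ and $\|\varepsilon_y\|_{L^2}\le\|\varepsilon\|_{H^1}$), so together they are $O(\|\varepsilon\|_{H^1}^2)$; and $|(R(\varepsilon),\Lambda\phi)_{L^2}| \lesssim \int|R(\varepsilon)| \lesssim \|\varepsilon\|_{L^2}^2 + \|\varepsilon\|_{L^2}\|\varepsilon_y\|_{L^2} \lesssim \|\varepsilon\|_{H^1}^2$ by \eqref{eq:3.12} (using that $\Lambda\phi \in L^\infty$ since $\phi$ and $x\phi_x$ decay). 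Collecting, $\frac{d}{ds}(\varepsilon,i\Lambda\phi)_{L^2} = -(\varepsilon,L\Lambda\phi)_{L^2} + O(\|\varepsilon\|_{H^1}^2)$, which is exactly \eqref{eq:4.8}; substituting $L\Lambda\phi = -2\omega\phi - ci\phi'$ — obtained by differentiating $S(\phi^\lambda) = \lambda^2 E(\phi) + \omega M(\phi) + \lambda c P(\phi)$ once in $\lambda$ at $\lambda=1$, i.e.\ $L\Lambda\phi = S''(\phi)\Lambda\phi = \partial_\lambda|_{\lambda=1} S'(\phi^\lambda)$, combined with $S'(\phi)=0$ — gives the claimed bound \eqref{eq:4.9}. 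There is no real obstacle here; the only mild care needed is bookkeeping of the $i$'s and sign conventions in the real-inner-product formalism, and confirming that all the relevant pairings of $\Lambda\phi$ against $\phi$, $i\phi'$, $i\Lambda\phi$ vanish — which is precisely the content of Lemma~\ref{lem:3.1} together with skew-adjointness of multiplication by $i$.
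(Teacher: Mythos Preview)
Your approach is the same as the paper's, and the overall structure is correct. Two points of bookkeeping need fixing.

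First, in group~(a) you have inserted an extra factor of $i$: after writing $\frac{d}{ds}(\ve,i\Lambda\phi)_{L^2}=-(i\ve_s,\Lambda\phi)_{L^2}$ and substituting \eqref{eq:3.11}, the term coming from $(\theta_s-\omega)\phi$ is $-(\theta_s-\omega)(\phi,\Lambda\phi)_{L^2}$, not $-(\theta_s-\omega)(i\phi,\Lambda\phi)_{L^2}$; similarly for the other two. With the correct pairings, the vanishing of $(\phi,\Lambda\phi)_{L^2}$ and $(i\Lambda\phi,\Lambda\phi)_{L^2}$ follow from skew-adjointness of $\Lambda$ and of $i$ respectively, but the term $(i\phi',\Lambda\phi)_{L^2}$ equals $\tfrac12 P(\phi)$ and is zero \emph{only} under the degeneracy hypothesis $c=2\kappa_0\sqrt{\omega}$ (Lemma~\ref{lem:3.1}(ii)). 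So your claim that degeneracy is ``not even needed'' for group~(a) is incorrect: it is needed precisely here, as the paper notes.

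Second, the bound $\|\Lambda\ve\|_{L^2}\lesssim\|\ve\|_{H^1}$ is false in general, since $\Lambda\ve=\tfrac12\ve+y\ve_y$ contains the weighted term $y\ve_y$ which need not lie in $L^2$. The correct way to estimate the group~(b) term $\frac{\lambda_s}{\lambda}(i\Lambda\ve,\Lambda\phi)_{L^2}$ is to use the skew-adjointness of $\Lambda$ to write $(i\Lambda\ve,\Lambda\phi)_{L^2}=(\ve,i\Lambda^2\phi)_{L^2}$, which is bounded by $\|\ve\|_{L^2}\|\Lambda^2\phi\|_{L^2}$; since $\phi$ decays (exponentially for $b>0$, polynomially for $b=0$), $\Lambda^2\phi\in L^2$ and the estimate follows. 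With these two corrections your argument is complete and matches the paper's.
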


\section{Proof of instability} \label{sec:5}

We are now in a position to complete the proof of Theorem~\ref{thm:1.3}.
We first note that by Lemma~\ref{lem:3.5}, the second term in the left-hand side of \eqref{eq:4.9} is rewritten as
\begin{align*}
  (\eps(s),2\omega\phi+ci\phi')_{L^2}
  &=\omega M_e(\eps(s))
  +\frac{c}{2}P_e(\eps(s))
  +O(\|\eps(s)\|_{H^1}^2).
\end{align*}
By Lemma~\ref{lem:3.6} we have
\begin{align*}
  \omega M_e(\eps(s))+\frac{c}{2}P_e(\eps(s))
  &=\omega M_e(\eps_0)+\frac{c}{2}\lambda(s)P_e(\eps_0)
\\&=2\omega(\eps_0,\phi)_{L^2}
  +c\lambda(s)(\eps_0,i\phi')_{L^2}
  +O(\|\eps_0\|_{H^1}^2).
\end{align*}
Therefore, we obtain the following expression:
\begin{align}
  \label{eq:5.1}
  (\ve(s),2\omega\phi+ci\phi')_{L^2}
  ={}&2\omega(\eps_0,\phi)_{L^2}
  +c\lambda(s)(\eps_0,i\phi')_{L^2}
\\\notag&\quad
  +O(\|\eps_0\|_{H^1}^2)+O(\|\eps(s)\|_{H^1}^2).
\end{align}

\begin{proof}[Proof of Theorem~\ref{thm:1.3}]
We proceed by contradiction. Suppose that for each $\alpha,\beta\in(0,1)$ there exists $u_0=u_{0,\alpha,\beta}\in H^1(\R)$ such that $\ve_0\ce u_0-\phi_{\om,c}$ satisfies 
\begin{align*}
  &0<\|\ve_0\|_{H^1}^2\le \beta \l|(\ve_0,\phi)_{L^2}\r|,&
  &\ve_0\perp\{\chi,i\phi,\phi',i\phi'\},&
\end{align*}
and that the solution $u(t)$ of \eqref{DNLSb} with $u(0)=u_0$ satisfies $u(t)\in U_\alpha$ for all $t\in\R$. It follows that $I_{\alpha}=\R$. 

Let $\alpha,\beta>0$ to be chosen later. For now we take $\alpha$ small enough so that
\begin{align*}
  0<\alpha
  <\min\{\alpha_1, \alpha_2\}<\alpha_0<1.
\end{align*}
In what follows, we only consider the case $(\ve_0,\phi)_{L^2}>0$ because one can treat the case $(\ve_0,\phi)_{L^2}<0$ in the same way. From Lemma~\ref{lem:3.7}, we have
\begin{align*}
  \sup_{s\in\R}\| \eps(s)\|_{H^1}^2
  \cleq\alpha(\eps_0,\phi)_{L^2}
  +\|\eps_0\|_{H^1}^2.
\end{align*}
We note that $\sup_{s\in\R}|\lambda(s)-1|\cleq\alpha$ by \eqref{eq:3.15}. Therefore, by Lemma~\ref{lem:4.3} and \eqref{eq:5.1}, we have
\begin{align*}
  \frac{d}{ds}(\ve(s),i\Lambda\phi)_{L^2}
  &\cgeq(\eps_0,\phi)_{L^2}
  -\alpha(\eps_0,\phi)_{L^2}
  +O(\|\eps_0\|_{H^1}^2)
\\&\cgeq(1-\alpha-C\beta)(\eps_0,\phi)_{L^2},
\end{align*}
where the constant $C$ is independent of $\eps_0$, $\alpha$, $\beta$, and $s$. Therefore, by taking $\alpha,\beta>0$ small enough, we obtain 
\begin{align*}
  \frac{d}{ds}(\ve(s),i\Lambda\phi)_{L^2}
  &\cgeq(\eps_0,\phi)_{L^2}>0
\end{align*}
for all $s\in\R$. This uniform estimate yields that
\[(\ve(s),i\Lambda\phi)_{L^2}
  \to\infty\quad
  \text{as}\ s\to\infty. \]
On the other hand, from \eqref{eq:3.15} we have the bound
\begin{align*}
  \sup_{s\in\R}\l|(\ve(s),i\Lambda\phi)_{L^2}\r| 
  \cleq \|\Lambda\phi\|_{L^2}
  <\infty,
\end{align*}
which is a contradiction. This completes the proof.
\end{proof}

\appendix
\section{Relation to instability theory on \eqref{eq:gKdV}} 
\label{sec:A}%
By following the argument of \cite{FHR19}, we review the instability theory of the soliton $Q(\cdot -t)$ for the $L^2$-critical generalized KdV equation
\begin{align*}
\tag{gKdV}
u_t+( u_{xx}+u^5)_x =0,\quad (t,x)\in \R\times\R,
\end{align*}
and see a relation to our proof of Theorem \ref{thm:1.3}. 

We define a tubular neighborhood around $Q$ by
\begin{align*}
  U_{\alpha}=\{u\in H^1(\R)\colon\,\inf_{y\in\R}\| u-Q(\cdot -y)\|_{H^1}<\alpha\}.
\end{align*}
The linearized operator $L$ around $Q$ is given by
\begin{align*}
Lv =-v_{xx}+v -5Q^4v\quad\text{for}~v\in H^1(\R).
\end{align*}
We note that $L$ satisfies the following properties:
\begin{align*}
LQ^3=-8Q^3,\quad\ker L =\spn\{Q' \}.
\end{align*}
We consider the initial data $u_0=Q+\eps_0$ such that $\eps_0\in H^1(\R)$ satisfies
\begin{align}
\label{eq:A.1}
  (\eps_0,Q^3)_{L^2}
  =(\eps_0,Q')_{L^2}=0.
\end{align}
Let $u(t)$ be the solution of \eqref{eq:gKdV} with $u(0)=u_0$. 
In the same way as in Section \ref{sec:3}, one can prove that there exist $\alpha_0>0$ and $C^1$-functions $\lambda(t)>0$ and $x(t)\in\R$ such that if $u(t)\in U_{\alpha_0}$ for all $t\ge 0$, then $\eps(t)=\eps(t,y)$ defined by
\begin{align*}
\eps(t,y)=\lambda(t)^{1/2}u(t,\lambda(t)y+x(t))-Q(y)
\end{align*}
satisfies 
\begin{align}
\label{eq:A.2}
  (\eps(t) ,Q^3)_{L^2}=(\eps(t) ,Q')_{L^2}=0\quad
  \text{for all}\ t\ge 0. 
\end{align}
We rescale the time $t\mapsto s$ by $\frac{ds}{dt}=\frac{1}{\lambda^3(t)}$. A direct calculation shows that $\eps(s)$ satisfies
\begin{align}
\label{eq:A.3}
	\eps_s 
   =(L\eps)_y+\frac{\lambda_s}{\lambda}\Lambda Q+\l(\frac{ x_s}{\lambda}-1\r) Q_y+\frac{\lambda_s}{\lambda}\Lambda\eps+\l(\frac{ x_s}{\lambda}-1\r)\eps_y-r(\eps)_y,
\end{align}
where $r(\eps)$ is the sum of second and higher order terms of $\eps$. By \eqref{eq:A.2} and \eqref{eq:A.3} one can prove that
\begin{align}
\label{eq:A.4}
\l| \frac{\lambda_s}{\lambda} \r|+\l| \frac{x_s}{\lambda}-1\r|
\cleq \|\eps(s)\|_{L^2}
\quad\text{for all}~s\ge 0. 
\end{align}

We now introduce the following functional
\begin{align}
\label{eq:A.5}
J (s) =\int \eps(s)\int_{-\infty}^{y}\Lambda Q,
\end{align}
which corresponds to \eqref{eq:4.7} as a Lyapunov functional.
As pointed out in \cite{FHR19}, if we consider the exponentially decaying data as
\begin{align}
\label{eq:A.6}
|\eps_0 (x)| \cleq ce^{-\delta|x|}\quad\text{for some}~\delta>0,
\end{align}
it is rather easy to show the $L^2$-exponential decay on the right of the soliton. In particular, \eqref{eq:A.5} is well-defined for all $s\ge 0$.
From \eqref{eq:A.3} and \eqref{eq:A.4}, one can obtain easily that
\begin{equation}
\label{eq:A.7}
    \frac{d}{ds}J(s)= -\int\eps(s)L\Lambda Q-\frac{\lambda_s}{2\lambda}\l(J(s)-\frac{1}{4}\l(\int Q\r)^2\r)
   +O(\| \eps(s)\|_{L^2}^2).
\end{equation}
Here we define a rescaled functional of $J$ by
\begin{align*}
K(s)=\lambda(s)^{1/2}\l(J(s)- \frac{1}{4}\l(\int Q\r)^2 \r).
\end{align*}
It follows from \eqref{eq:A.7} that
\begin{align*}
\frac{d}{ds}K(s)&=-\lambda(s)^{1/2}\int \eps(s)L\Lambda Q +O(\| \eps(s)\|_{L^2}^2),
\end{align*}
which corresponds to \eqref{eq:4.8}. By using the relation $L\Lambda Q=-2Q$, we have
\begin{align*}
\frac{d}{ds}K(s)&=2\lambda(s)^{1/2}\int \eps(s)Q +O(\| \eps(s)\|_{L^2}^2),
\end{align*}
which corresponds to \eqref{eq:4.9}. Therefore, if we assume \eqref{eq:A.1}, \eqref{eq:A.6} and
\begin{align}
\label{eq:A.8}
0<\|\eps_0\|_{H^1}^2\le b_0\int \eps_0Q
\end{align}
for suitably small $b_0>0$, we can complete the proof of instability of the soliton.

We conclude that the functionals \eqref{eq:4.7} and \eqref{eq:A.5} play an essential role in the proof of instability of the degenerate solitons in \eqref{DNLSb} and \eqref{eq:gKdV}, respectively, and that the unstable directions are determined by $L\Lambda\phi$ for \eqref{DNLSb} and $L\Lambda Q$ for \eqref{eq:gKdV}, respectively.

\section*{Acknowledgments}
N.F. was supported by JSPS KAKENHI Grant Number JP20K14349 and  M.H. by JSPS KAKENHI Grant Number JP19J01504.

\end{document}